\definecolor{darkgreen}{rgb}{0,0.45,0}
\DeclareMathAlphabet{\mathbf}{OT1}{cmr}{b}{n}
\def\matrixobject@{%
  \edef \next@{={\DirectionfromtheDirection@ }}%
  \expandafter \toks@ \next@ \plainxy@
  \let\xy@@ix@=\xyq@@toksix@
  \xyFN@ \OBJECT@}
\let\xy@entry@@norm=\entry@@norm
\def\entry@@norm@patched{%
  \let\object@=\matrixobject@
  \xy@entry@@norm }
\newcommand{\twocong}[2][0.5]{\ar@{}[#2] \save ?(#1)*{\cong}\restore}
\newcommand{\twoeq}[2][0.5]{\ar@{}[#2] \save ?(#1)*{=}\restore}
\newcommand{\rtwocell}[3][0.5]{\ar@{}[#2] \ar@{=>}?(#1)+/l 0.2cm/;?(#1)+/r 0.2cm/^{#3}}
\newcommand{\ltwocell}[3][0.5]{\ar@{}[#2] \ar@{=>}?(#1)+/r 0.2cm/;?(#1)+/l 0.2cm/^{#3}}
\newcommand{\ltwocello}[3][0.5]{\ar@{}[#2] \ar@{=>}?(#1)+/r 0.2cm/;?(#1)+/l 0.2cm/_{#3}}
\newcommand{\dtwocell}[3][0.5]{\ar@{}[#2] \ar@{=>}?(#1)+/u  0.2cm/;?(#1)+/d 0.2cm/^{#3}}
\newcommand{\dltwocell}[3][0.5]{\ar@{}[#2] \ar@{=>}?(#1)+/ur  0.2cm/;?(#1)+/dl 0.2cm/^{#3}}
\newcommand{\urtwocell}[3][0.5]{\ar@{}[#2] \ar@{=>}?(#1)+/dl  0.2cm/;?(#1)+/ur 0.2cm/^{#3}}
\newcommand{\drtwocell}[3][0.5]{\ar@{}[#2] \ar@{=>}?(#1)+/ul  0.2cm/;?(#1)+/dr 0.2cm/^{#3}}
\newcommand{\dthreecell}[3][0.5]{\ar@{}[#2] \ar@3{->}?(#1)+/u  0.2cm/;?(#1)+/d 0.2cm/^{#3}}
\newcommand{\utwocell}[3][0.5]{\ar@{}[#2] \ar@{=>}?(#1)+/d 0.2cm/;?(#1)+/u 0.2cm/_{#3}}
\newcommand{\dtwocelltarg}[3][0.5]{\ar@{}#2 \ar@{=>}?(#1)+/u  0.2cm/;?(#1)+/d 0.2cm/^{#3}}
\newcommand{\utwocelltarg}[3][0.5]{\ar@{}#2 \ar@{=>}?(#1)+/d  0.2cm/;?(#1)+/u 0.2cm/_{#3}}
\newcommand{\pullbackcorner}[1][dr]{\save*!/#1-1.2pc/#1:(-1,1)@^{|-}\restore}
\newcommand{\sh}[2]{**{!/#1 -#2/}}
\newcommand{\cat}[1]{\mathsf{#1}}%modified to match HKRS
\newcommand{\id}{\mathrm{id}}
\newcommand{\thg}{{\mathord{\text{--}}}}
\newcommand{\spn}[1]{{\langle{#1}\rangle}}
\newcommand{\cd}[2][]{\vcenter{\hbox{\xymatrix#1{#2}}}}
\renewcommand{\phi}{\varphi}
\newcommand{\A}{{\mathcal A}}
\newcommand{\B}{{\mathcal B}}
\newcommand{\C}{{\mathsf C}}%modified to match HKRS
\newcommand{\F}{{\mathcal F}}
\newcommand{\K}{{\mathsf K}}%modified to match HKRS
\renewcommand{\L}{{\mathcal L}}
\newcommand{\M}{{\mathsf M}} %modified to match HKRS
\newcommand{\R}{{\mathcal R}}
\let\sec=\S
\renewcommand{\S}{{\mathcal S}}
\newcommand{\W}{{\mathcal W}}
\newcommand{\X}{{\mathcal X}}
\renewcommand{\AA}{\mathbb{A}}
\newcommand{\LL}{\mathbb{L}}
\newcommand{\RR}{\mathbb{R}}
\newcommand{\UU}{\mathbb{U}}
\newcommand{\ZZ}{\mathbb{Z}}
\newcommand{\WE}{\mathcal{W}}
\newcommand{\Fib}{\mathcal{F}}
\newcommand{\Cof}{\mathcal{C}}
\newcommand{\rlp}[1]{{#1}^\boxslash}
\newcommand{\llp}[1]{{}^\boxslash\!{#1}}
\DeclareMathAlphabet{\mathbbe}{U}{bbold}{m}{n}
\renewcommand{\2}{\mathbbe{2}}
\newcommand{\3}{\mathbbe{3}}
\newcommand{\coalg}[1][\LL]{\mathsf{Coalg}_{#1}}
\newcommand{\alg}[1][\RR]{\mathsf{Alg}^{#1}}
\newcommand{\ecoalg}[1][(L, \vec \epsilon)]{\mathsf{Coalg}_{#1}}
\newcommand{\ealg}[1][(R, \vec \eta)]{\mathsf{Alg}^{#1}}
\newcommand{\CCoalg}[1][\LL]{\mathbb{C}\mathsf{oalg}_{#1}}
\newcommand{\AAlg}[1][\RR]{\mathbb{A}\mathsf{lg}^{#1}}
\newcommand{\Sq}[1]{\mathbb{S}\mathsf{q}(#1)}
\newcommand{\xtor}[1]{\cdl[@1]{{} \ar[r]|-{\object@{|}}^{#1} & {}}}
\def\hookleftarrowfill@{\arrowfill@\leftarrow\relbar{\relbar\joinrel\rhook}}
\def\twoheadleftarrowfill@{\arrowfill@\twoheadleftarrow\relbar\relbar}
\def\leftbararrowfill@{\arrowdoublefill@{\leftarrow\mkern-5mu}\relbar\mapstochar\relbar\relbar}
\def\Leftbararrowfill@{\arrowdoublefill@{\Leftarrow\mkern-2mu}\Relbar\Mapstochar\Relbar\Relbar}
\def\leftringarrowfill@{\arrowdoublefill@{\leftarrow\mkern-3mu}\relbar{\mkern-3mu\circ\mkern-2mu}\relbar\relbar}
\def\lefttriarrowfill@{\arrowfill@{\mathrel\triangleleft\mkern0.5mu\joinrel\relbar}\relbar\relbar}
\def\Lefttriarrowfill@{\arrowfill@{\mathrel\triangleleft\mkern1mu\joinrel\Relbar}\Relbar\Relbar}
\def\hookrightarrowfill@{\arrowfill@{\lhook\joinrel\relbar}\relbar\rightarrow}
\def\twoheadrightarrowfill@{\arrowfill@\relbar\relbar\twoheadrightarrow}
\def\rightbararrowfill@{\arrowdoublefill@{\relbar\mkern-0.5mu}\relbar\mapstochar\relbar\rightarrow}
\def\Rightbararrowfill@{\arrowdoublefill@{\Relbar\mkern-2mu}\Relbar\Mapstochar\Relbar\Rightarrow}
\def\rightringarrowfill@{\arrowdoublefill@\relbar\relbar{\mkern-2mu\circ\mkern-3mu}\relbar{\mkern-3mu\rightarrow}}
\def\righttriarrowfill@{\arrowfill@\relbar\relbar{\relbar\joinrel\mkern0.5mu\mathrel\triangleright}}
\def\Righttriarrowfill@{\arrowfill@\Relbar\Relbar{\Relbar\joinrel\mkern1mu\mathrel\triangleright}}
\def\leftrightarrowfill@{\arrowfill@\leftarrow\relbar\rightarrow}
\def\mapstofill@{\arrowfill@{\mapstochar\relbar}\relbar\rightarrow}
\renewcommand*\xleftarrow[2][]{\ext@arrow 20{20}0\leftarrowfill@{#1}{#2}}
\providecommand*\xLeftarrow[2][]{\ext@arrow 60{22}0{\Leftarrowfill@}{#1}{#2}}
\providecommand*\xhookleftarrow[2][]{\ext@arrow 10{20}0\hookleftarrowfill@{#1}{#2}}
\providecommand*\xtwoheadleftarrow[2][]{\ext@arrow 60{20}0\twoheadleftarrowfill@{#1}{#2}}
\providecommand*\xleftbararrow[2][]{\ext@arrow 10{22}0\leftbararrowfill@{#1}{#2}}
\providecommand*\xLeftbararrow[2][]{\ext@arrow 50{24}0\Leftbararrowfill@{#1}{#2}}
\providecommand*\xleftringarrow[2][]{\ext@arrow 10{26}0\leftringarrowfill@{#1}{#2}}
\providecommand*\xlefttriarrow[2][]{\ext@arrow 80{24}0\lefttriarrowfill@{#1}{#2}}
\providecommand*\xLefttriarrow[2][]{\ext@arrow 80{24}0\Lefttriarrowfill@{#1}{#2}}
\renewcommand*\xrightarrow[2][]{\ext@arrow 01{20}0\rightarrowfill@{#1}{#2}}
\providecommand*\xRightarrow[2][]{\ext@arrow 04{22}0{\Rightarrowfill@}{#1}{#2}}
\providecommand*\xhookrightarrow[2][]{\ext@arrow 00{20}0\hookrightarrowfill@{#1}{#2}}
\providecommand*\xtwoheadrightarrow[2][]{\ext@arrow 03{20}0\twoheadrightarrowfill@{#1}{#2}}
\providecommand*\xrightbararrow[2][]{\ext@arrow 01{22}0\rightbararrowfill@{#1}{#2}}
\providecommand*\xRightbararrow[2][]{\ext@arrow 04{24}0\Rightbararrowfill@{#1}{#2}}
\providecommand*\xrightringarrow[2][]{\ext@arrow 01{26}0\rightringarrowfill@{#1}{#2}}
\providecommand*\xrighttriarrow[2][]{\ext@arrow 07{24}0\righttriarrowfill@{#1}{#2}}
\providecommand*\xRighttriarrow[2][]{\ext@arrow 07{24}0\Righttriarrowfill@{#1}{#2}}
\providecommand*\xmapsto[2][]{\ext@arrow 01{20}0\mapstofill@{#1}{#2}}
\providecommand*\xleftrightarrow[2][]{\ext@arrow 10{22}0\leftrightarrowfill@{#1}{#2}}
\providecommand*\xLeftrightarrow[2][]{\ext@arrow 10{27}0{\Leftrightarrowfill@}{#1}{#2}}
\newtheorem{Thm}{Theorem}[section]
\newtheorem{Prop}[Thm]{Proposition}
\newtheorem{Cor}[Thm]{Corollary}
\newtheorem{Lemma}[Thm]{Lemma}
\theoremstyle{definition}
\newtheorem{Defn}[Thm]{Definition}
\newtheorem{Ex}[Thm]{Example}
\newtheorem{Rk}[Thm]{Remark}
\newtheorem{Cl}[Thm]{Claim}
\let\c@equation\c@Thm
\numberwithin{equation}{section}
\newcommand{\Mod}{\cat{Mod}}
\newcommand{\Set}{\cat{Set}}
\newcommand{\ul}{\mathrm{U}_\L}
\newcommand{\ur}{\mathrm{U}_\R}
\newcommand{\ulb}{\mathrm{U}_{\smash{\ll{\L}}}}
\newcommand{\urb}{\mathrm{U}_{\smash{\rl{\R}}}}
\DeclareRobustCommand{\cev}[1]{%
  \mathpalette\do@cev{#1}%
}
\newcommand{\do@cev}[2]{%
  \fix@cev{#1}{+}%
  \reflectbox{$\m@th#1\vec{\reflectbox{$\fix@cev{#1}{-}\m@th#1#2\fix@cev{#1}{+}$}}$}%
  \fix@cev{#1}{-}%
}
\newcommand{\fix@cev}[2]{%
  \ifx#1\displaystyle
    \mkern#23mu
  \else
    \ifx#1\textstyle
      \mkern#23mu
    \else
      \ifx#1\scriptstyle
        \mkern#22mu
      \else
        \mkern#22mu
      \fi
    \fi
  \fi
}
\renewcommand{\ll}[1]{\cev{#1}}
\newcommand{\rl}[1]{\vec{#1}}
\setlist{leftmargin=2.5\parindent,itemsep=0.25\baselineskip}
\begin{document}

\title{Lifting accessible model structures}
 \subjclass[2010]{Primary:} \date{\today}
 \author{Richard Garner}
 \address{Centre of Australian Category Theory, Macquarie University, NSW 2109, Australia}
 \email{richard.garner@mq.edu.au}

\author{Magdalena K\k{e}dziorek}
\address{Max-Planck-Institut f\"ur Mathematik, Vivatsgasse 7, 53111 Bonn, Germany}
\email{kedziorek@mpim-bonn.mpg.de}

\author{Emily Riehl} 
\address{Department of Mathematics, Johns Hopkins University, Baltimore, MD 21218, USA}
\email{eriehl@math.jhu.edu}

\begin{abstract} 
  A Quillen model structure is presented by an interacting pair of
  weak factorization systems. We prove that in the world of locally
  presentable categories, any weak factorization system with
  \emph{accessible} functorial factorizations can be lifted along
  either a left or a right adjoint. It follows that \emph{accessible
    model structures} on locally presentable categories---ones
  admitting accessible functorial factorizations, a class that
  includes all combinatorial model structures but others besides---can
  be lifted along either a left or a right adjoint if and only if an
  essential ``acyclicity'' condition holds. A similar result was
  claimed in a paper of Hess--K\k{e}dziorek--Riehl--Shipley, but the
  proof given there was incorrect. In this note, we explain this error
  and give a correction, and also provide a new statement and a
  different proof of the theorem which is more tractable for
  homotopy-theoretic applications.
\end{abstract}

\maketitle
\tableofcontents

\newcommand{\bs}{\boldsymbol}

\section{Introduction}

In abstract homotopy theory, one often works with categories endowed
with a class $\WE$ of \emph{weak equivalences} which, though not
necessarily isomorphisms themselves, satisfy closure properties
resembling those of the isomorphisms\footnote{More precisely, $\WE$
  should contain all identities and satisfy the 2-out-of-6
  property.% : given any composable triple of maps
  % with $hg$ and $gf$ in $\WE$ then $f$, $g$, $h$, and $hgf$ are also
  % in $\WE$.
}. In many cases, the category $\M$ at issue is complete, cocomplete,
and endowed with further classes of maps $\Cof$ and $\Fib$, called
\emph{cofibrations} and \emph{fibrations}, for which the pairs
\begin{equation}\label{eq:compatibility}
  (\Cof \cap\WE,\Fib) \qquad \mathrm{and} \qquad (\Cof,\Fib\cap\WE)
\end{equation}
satisfy the factorization and lifting properties axiomatized by the
notion of a \emph{weak factorization system}; see Definition
\ref{def:wfs} below. One then has a \emph{Quillen model category}: a
setting rich enough to perform many of the classical constructions of
homotopy theory.

While model structures are convenient to have, they can be difficult
to construct. One of the most useful tools for building model
structures is that of ``lifting'' an existing model structure
$(\WE, \Cof, \Fib)$ on a category $\M$ along an adjoint functor in
either one of the following situations:
\begin{equation}\label{eq:10}
  \cd[@C+1em]{
    {\C} \ar@{<-}@<-4.5pt>[r]_-{L} \ar@{}[r]|-{\top} &
    {\M} \ar@{<-}@<-4.5pt>[l]_-{U} & \text{or} & 
    {\K} \ar@{<-}@<-4.5pt>[r]_-{R} \ar@{}[r]|-{\bot} &
    {\M} \ar@{<-}@<-4.5pt>[l]_-{V}\rlap{ .}
  }
  % \xymatrix@C=4pc@R=4pc{ \K \ar@<1ex>[r]^V \ar@{}[r]|\perp & \M
  % \ar@<1ex>[l]^R \ar@<1ex>[r]^L\ar@{}[r]|\perp & \C
  % \ar@<1ex>[l]^U}
\end{equation}
% \begin{equation}\label{eq:5}
%   U \colon \K \rightarrow \M\rlap{ .}
% \end{equation}

On the one hand, if $U \colon \C \rightarrow \M$ is a right adjoint
functor, we may attempt to define a model structure on $\C$ by taking
the classes of weak equivalences and fibrations to be $U^{-1}(\WE)$
and $U^{-1}(\Fib)$ respectively; the model category axioms then force
the definition of the cofibrations in $\C$, since they are supposed to
provide the left class of a weak factorization system with right class
$U^{-1}(\F \cap \W)$. When these classes determine a model structure
on $\C$, we call it a \emph{right-lifting} of $(\WE, \Cof, \Fib)$
along $U$. On the other hand, if $V \colon \K \rightarrow \M$ is a
left adjoint functor, we may define weak equivalences and cofibrations
in $\K$ as the classes $V^{-1}(\WE)$ and $V^{-1}(\Cof)$, and define
the fibrations in the only way allowed by the model category axioms.
When these classes determine a model structure on $\K$, we call it a
\emph{left-lifting} of $(\WE, \Cof, \Fib)$ along $V$.

It is not always the case that right or left lifting will determine a
model structure. First, there is an essential ``acyclicity condition''
which must be satisfied, which ensures that the left and right classes
of the weak factorization systems are compatible with the
cofibrations, fibrations, and weak equivalences in the sense
of~\eqref{eq:compatibility}. In the right-lifted case, the acyclicity
condition asserts that the left class of the weak factorization system
determined by $U^{-1}(\Fib)$ (i.e.~the class of maps which are
supposed to be acyclic cofibrations) is contained in the class
$U^{-1}(\WE)$ of lifted weak equivalences. This condition is
non-trivial to check, and typically requires some genuine insight into
the homotopy theory at issue.

The other precondition for existence of the lifted model structure is
existence of the lifted weak factorization systems: and while the lifting
axiom is satisfied by construction, the existence of factorizations is
not automatic. However, there are a range of results available which verify
this existence using only general properties of the categories
involved and of the model structure $(\WE, \Cof, \Fib)$---thus
reducing the question of lifting model structures to the essential
acyclicity condition.

One situation in which lifted weak factorization systems always exist
is the combinatorial setting; here, the categories involved are
\emph{locally presentable}~\cite{Gabriel1971Lokal}---an assumption
which will remain in place for the rest of the introduction---and the
model structure $(\WE, \Cof, \Fib)$ is \emph{cofibrantly
  generated}~\cite[Definition~2.1.17]{Hovey1999Model}. In this
context, it has been understood for several decades that right-lifted
factorizations can be constructed explicitly using Quillen's small
object argument. Very recently,~\cite{makkai-rosicky} showed that in
this same setting, left-lifted factorizations also exist; this
breakthrough result was put into the model-categorical context
in~\cite[Theorem 2.23]{BHKKRS}, and has since been used to construct
interesting new model categories~\cite{ching-riehl, hess-shipley}.

These results were generalized in~\cite{HKRS} to obtain left and right
liftings of factorizations for what the authors term \emph{accessible}
model structures. The simplest formulation of what this means is that
given in~\cite{rosicky-accessible}: a model structure is accessible if
its factorizations into the classes~\eqref{eq:compatibility} can be
realized by accessible functors---ones which preserve
$\lambda$-filtered colimits for some regular cardinal $\lambda$. In
particular, any cofibrantly generated model structure is accessible,
but others besides: for example, the model structures on dg-modules
considered in~\cite{Barthel2013Six-model}. The main Corollary~3.3.4
of~\cite{HKRS} asserts that, if $(\WE, \Cof, \Fib)$ is accessible,
then both left- and right-lifted factorizations always exist; this has
already found practical application to the construction of new model
structures in~\cite{hess-kedziorek,moser}.

While the main result of~\cite{HKRS} is correct, the proof given there
turns out to contains a subtle error: in some cases, it exhibits
``lifted factorizations'' which are not those of the desired left- or
right-lifted weak factorization systems, but of slightly different
ones. The purpose of this note is to fix this error. In fact, we do so
in two ways: once by correcting the argument of~\cite{HKRS}, and once
by a different argument which sidesteps the difficulties at issue.
For good measure, we also give some concrete examples in which the
previous argument does indeed break down.

We now retrace the reasoning of~\cite{HKRS} with a view to explaining
what goes wrong. First let us note that the authors of
\emph{ibid}.~express accessibility of a model structure in a different
way to~\cite{rosicky-accessible}---taking it to mean that the two weak
factorization systems of the model structure can be made into
\emph{accessible algebraic weak factorization
  systems}~\cite{bourke-garner-awfsI}. This means that, as well as
accessible functors
\begin{equation}\label{eq:6}
  X \xrightarrow{f} Y \qquad \mapsto \qquad X \xrightarrow{Lf} Ef
  \xrightarrow{Rf} Y
\end{equation}
that realize the factorizations in each case, there should also be
provided fillers:
\begin{equation}\label{eq:4}
  \cd{
    {X} \ar[r]^-{LLf} \ar[d]_{Lf} &
    {ELf} \ar[d]^{RLf} & &
    {Ef} \ar@{=}[r] \ar[d]_{LRf} &
    {Ef} \ar[d]^{Rf} \\
    {Ef} \ar@{=}[r] \ar@{-->}[ur]_-{\delta_f} &
    {Ef} & &
    {ERf} \ar[r]_-{RRf} \ar@{-->}[ur]_-{\mu_f} &
    {Ef}
  }
\end{equation}
subject to axioms which, among other things, cause these data to
endow the functors $L \colon \M^\2 \rightarrow \M^\2$ and
$R \colon \M^\2 \rightarrow \M^\2$ with the structure of a
comonad $\LL$ and a monad $\RR$ respectively; see
Definition~\ref{def:2} below. While this is apparently stronger
than~\cite{rosicky-accessible}'s notion of accessible model category,
it turns out that, starting from the less elaborate definition, one
can always derive the data required for the more elaborate one; this
is the content of Remark~3.1.8 of~\cite{HKRS}.

The motivation for adopting the more involved definition of
accessibility is that it allows application of~\cite[Proposition
13]{bourke-garner-awfsI}, which says that accessible algebraic weak
factorization systems can always be left- and right-lifted. The
intended approach is thus the following. With $(\L, \R)$ taken
successively to be $(\Cof, \Fib \cap \WE)$ and
$(\Cof \cap \WE, \Fib)$, one first algebraizes $(\L, \R)$ to
$(\LL, \RR)$; then right- or left-lifts this along
$U \colon \C \rightarrow \M$ or $V \colon \K \rightarrow \M$ to an
algebraic weak factorization system $(\LL', \RR')$; and then takes the
\emph{underlying weak factorization system} $(\L', \R')$, whose
classes comprise the retracts of maps of the form $L'f$ or $R'f$
respectively.

This is the argument of~\cite[Corollary~3.3.4]{HKRS}; for it to work,
one must be sure that the $(\L', \R')$ produced above is indeed left-
or right-lifted from $(\L, \R)$, meaning in the left case that
$\L' = V^{-1}(\L)$, and dually in the right. This is claimed to be the
case in Theorems~3.3.1 and 3.3.2 of~\cite{HKRS}, but the claim is
incorrect. The reason is subtle, and has to do with what exactly is
lifted in applying~\cite[Proposition~13]{bourke-garner-awfsI}.

Concentrating on the left case, one lifts not the $\L$-maps of the
underlying weak factorization system, but rather the
\emph{$\mathbb{L}$-maps}: the coalgebras for the comonad $\mathbb{L}$.
While every $\mathbb{L}$-map has the property of being an $\L$-map, a
given $\L$-map may not be the underlying map of any $\mathbb{L}$-map;
it may be necessary to take a retract\footnote{A good intuition is
  that, if the $\L$-maps are the ``cofibrations'', then the
  $\mathbb{L}$-maps are the ``relative cell complexes''. In many cases
  this is literally true: see~\cite{Athorne2013Coalgebraic}.}. The
upshot of this is that, if one applies the above procedure to
$(\L, \R)$, one finds that $\L'$ comprises the retract-closure
of $V^{-1}(\mathbb{L}\text{-map})$ rather than
$V^{-1}(\L) = V^{-1}(\text{retract-closure of
}\mathbb{L}\text{-map})$; and while the former is always included in
the latter, the inclusion may be strict, as shown in
Section~\ref{sec:flaw-previous-proof} below.

In this way, the above procedure may produce factorizations for an
incorrect lifting of one of the original weak factorization systems.
The author who is responsible for this error was well aware of this
issue when she proved
\cite[Theorem~3.10]{riehl-algebraic-model-structures}---indeed, an
important part of that argument explains why it does not arise in the
cofibrantly generated and right-lifting context---but had fallen out
of touch with that awareness when writing Section~3 of~\cite{HKRS}.

Note that the problem we have described would not arise if the maps in
$\M$ admitting $\mathbb{L}$-map structure were already closed under
retracts. This observation suggests a fix: we adjust the
algebraization $(\LL, \RR)$ appropriately before lifting. Indeed, in
Proposition~\ref{prop:shifted-awfs} below, we will see that any
algebraic weak factorization system $(\LL, \RR)$ may be ``shifted'' to
one $(\LL^\sharp, \RR^\sharp)$ whose underlying weak factorization
system is the same, but whose $\LL^\sharp$-maps \emph{are} closed
under retracts. Now to correct the above procedure for left-lifting,
we need only interpolate the step of replacing $(\LL, \RR)$ by
$(\LL^\sharp, \RR^\sharp)$. Of course, exactly the same issues arise
in the case of right-lifting, and exactly the same fix is possible,
this time involving a dual shifting $(\LL^\flat, \RR^\flat)$; all of
this is detailed in Section~\ref{sec:fixing-prev-proof} below.

In addition to correcting the argument that
proves~\cite[Corollary~3.3.4]{HKRS}, the remaining aspect of this
paper is a new proof of the result which proceeds directly from the
simpler definition of accessible model structure given
in~\cite{rosicky-accessible}. In particular, this argument avoids the
use of algebraic weak factorization systems entirely, since these are
beside the point for the homotopy-theoretic applications. It is with
this more streamlined proof that we now begin the paper.

\section{The new proof}

\subsection{Background and statement of results}
\label{sec:background}

Our terminology and approach will largely follow that of~\cite{HKRS};
we begin by recalling the necessary background. Given a class of maps
$\X$ in a category $\C$, we write $\llp{\X}$ and $\rlp{\X}$ for the
classes of maps with the left, respectively right, lifting property
against each map in $\X$, and given a functor
$H \colon \C' \rightarrow \C$, we write $H^{-1}\X$ for the class of
morphisms in $\C'$ which are mapped into $\X$ by $H$.

\begin{Defn}
\label{def:wfs}
A \emph{weak factorization system} $(\L,\R)$ on a category $\C$ is
given by a \emph{left} class of maps $\L$ and a \emph{right} class of
maps $\R$ such that:
\begin{enumerate}[itemsep=0em]
\item Every morphism in $\C$ can be factored as a map in 
  $\L$ followed by one in $\R$.
\item The classes $\L$ and $\R$ are mutually determined by the equations:
  \[ \L = \llp{\R}\qquad \text{and}\qquad \R = \rlp{\L} \rlap{ ;}\] in
  the presence of the first axiom, this is equally to ask that each
  $\L$-map has the left lifting property against each $\R$-map, and
  that both $\L$ and $\R$ are closed under retracts.
\end{enumerate}

\end{Defn}

We have already discussed in the introduction what we mean by a left-
or right-lifting of a model structure along a left or right adjoint
functor; this is what Definition~2.1.3 of~\cite{HKRS} called the
\emph{left-induced} or \emph{right-induced} model structure. More
generally, we can speak of the \textit{left-lifting} or
\textit{right-lifting} of a weak factorization system $(\L, \R)$ along
a left adjoint $V$ or right adjoint $U$ as in~\eqref{eq:10}; when
these exist, they are by definition the weak factorization systems on
the domain  category with respective classes
\begin{equation}\label{eq:2}
  (\ll \L, \ll \R) = (V^{-1}\L, \rlp{(V^{-1}\L)}) \qquad \text{and}
  \qquad (\rl \L, \rl \R) = (\llp\,{(U^{-1}\R)}, U^{-1}\R)\rlap{ .}
\end{equation}

A necessary condition for the existence of a left- or right-lifted
model structure is that both of its underlying weak factorization
systems~\eqref{eq:compatibility} should admit left- or right-liftings.
Conversely, if we assume such liftings, then we have:

\begin{Prop}[Proposition~2.1.4 of~\cite{HKRS}]
  \label{prop:4}
  Let $(\WE, \Cof, \Fib)$ be a model structure on $\M$, and suppose
  there are given adjunctions as in~\eqref{eq:10} for which the
  right-lifted weak factorization systems exist on $\C$ and the
  left-lifted weak factorization systems exist on $\K$. In this
  situation:
  \begin{enumerate}
  \item The right-lifted model structure exists on $\C$ if and only
    if the \textbf{right acyclicity condition} $\llp\,{(U^{-1}\Fib)} \subseteq
    U^{-1}\WE$ is satisfied.
  \item The left-lifted model structure exists on $\C$ if and only if
    the \textbf{left acyclicity condition}
    $\rlp{(V^{-1}\Cof)} \subseteq V^{-1}\WE$ is satisfied.
  \end{enumerate}
\end{Prop}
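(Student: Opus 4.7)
I treat (i); (ii) is strictly dual, with the roles of left and right classes interchanged. Introduce abbreviations
\[
  \Cof'_1 \defeq \rl{(\Cof\cap\WE)} = \llp\,{(U^{-1}\Fib)} \qquad \text{and} \qquad \Cof'_0 \defeq \rl\Cof = \llp\,{(U^{-1}(\Fib\cap\WE))}
\]
for the left classes of the two hypothesized right-lifted weak factorization systems on $\C$, and set $\WE' \defeq U^{-1}\WE$ and $\Fib' \defeq U^{-1}\Fib$. The plan is to show that $(\WE', \Cof'_0, \Fib')$ defines a model structure on $\C$ exactly when $\Cof'_1 \subseteq \WE'$, which is precisely the right acyclicity condition. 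Necessity is immediate: in any model structure with fibrations $\Fib'$, the acyclic cofibrations must form the left class of the $\Fib'$-wfs---namely $\Cof'_1$---and by definition these lie in $\WE'$.

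For sufficiency, observe that $\WE'$ inherits 2-out-of-3 and retract closure from $\WE$ because $U$ preserves composition and retracts, while the two right-lifted weak factorization systems are in hand by hypothesis. The only content is therefore to verify the compatibility
\[
  \Cof'_1 = \Cof'_0 \cap \WE'.
\]
The inclusion $\subseteq$ combines the evident $\Cof'_1 \subseteq \Cof'_0$ (which follows from $U^{-1}(\Fib\cap\WE) \subseteq U^{-1}\Fib$ upon passing to left-lifting complements) with the acyclicity hypothesis $\Cof'_1 \subseteq \WE'$ used verbatim. For $\supseteq$, given $f \in \Cof'_0 \cap \WE'$, factor $f = p \circ i$ through the $(\Cof'_1, U^{-1}\Fib)$-system: then $i \in \Cof'_1 \subseteq \WE'$ by the inclusion just established, and $f \in \WE'$ by assumption, so 2-out-of-3 forces $p \in \WE' \cap U^{-1}\Fib = U^{-1}(\Fib\cap\WE)$. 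Since $f \in \Cof'_0 = \llp\,{(U^{-1}(\Fib\cap\WE))}$ lifts against $p$, the standard retract argument exhibits $f$ as a retract of $i$ in the arrow category, and retract closure of the left class $\Cof'_1$ then yields $f \in \Cof'_1$.

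The argument is essentially formal---it is the standard packaging of two compatible weak factorization systems into a model structure, transported to the lifted setting---and there is no genuine obstacle. The acyclicity hypothesis enters in a single inescapable spot, namely to break the apparent circularity in the identification of the acyclic cofibrations; every other ingredient is supplied directly by the existence of the two lifted weak factorization systems and the preservation properties of $U$.
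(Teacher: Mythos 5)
Your argument is correct and complete: it is the standard packaging argument (necessity from the fact that the acyclic cofibrations of any model structure with fibrations $U^{-1}\Fib$ must be $\llp\,{(U^{-1}\Fib)}$; sufficiency by reducing everything to the identity $\llp\,{(U^{-1}\Fib)} = \llp\,{(U^{-1}(\Fib\cap\WE))}\cap U^{-1}\WE$, proved via factorization, 2-out-of-3 and the retract argument). The paper itself does not reprove this proposition but simply cites Proposition~2.1.4 of~\cite{HKRS}, whose proof proceeds in essentially the same way as yours.
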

As noted in the introduction, the satisfaction of the acyclicity
condition typically depends on non-trivial homotopy-theoretic
arguments; this is discussed at some length in~\cite[\sec 2.2] {HKRS}.
In this paper, however, our sole interest will be in verifying the
existence of the lifted weak factorization systems as in~\eqref{eq:2}. 
The setting in which we do so is that of \emph{accessible weak factorization systems}.

\begin{Defn}
  \label{def:3}
  A weak factorization system $(\L, \R)$ on a category $\M$ is called
  \emph{accessible} if $\M$ is locally presentable, and there is given
  a functorial realization
    \begin{equation}\label{eq:7}
    A \xrightarrow{f} B \qquad \mapsto \qquad A \xrightarrow{Lf} Ef
    \xrightarrow{Rf} B
  \end{equation}
  for $(\L, \R)$ whose underlying functor
  $E \colon \M^\2 \rightarrow \M$ is accessible.\footnote{By the usual
    retract argument, a given functorial factorization provides
    factorizations for at most one weak factorization system: namely,
    that whose left and right classes comprise the retracts of the
    $Lf$'s and the $Rf$'s respectively. This is the sense in which we
    refer to $(L,R)$ as a \emph{functorial realization} of the weak
    factorization system $(\L,\R)$.} A model structure on $\M$ is
  \emph{accessible} if its underlying weak factorization
  systems~\eqref{eq:compatibility} are so.
\end{Defn}

The key objective of this paper is to give a correct proof of:
\begin{Thm}\label{thm:2}
  Let $(\L, \R)$ be an accessible weak factorization system on $\M$,
  and suppose there are given adjunctions~\eqref{eq:10}
  % \begin{equation*}
  %   \xymatrix@C=4pc@R=4pc{ \K \ar@<1ex>[r]^V \ar@{}[r]|\perp & \M
  %     \ar@<1ex>[l]^R \ar@<1ex>[r]^L\ar@{}[r]|\perp & \C
  %     \ar@<1ex>[l]^U}
  % \end{equation*}
  with $\C$ and $\K$ also locally presentable. In these circumstances,
  $(\L, \R)$ admits a left-lifting along $V$ and a right-lifting along
  $U$, and these are again accessible.
  % for which the right-lifted weak factorization systems exist on $\C$
  % and the left-lifted weak factorization systems exist on $\K$. In
  % this situation:
  % \begin{enumerate}
  % \item The right-lifted model structure exists on $\C$ if and only
  %   if the following acyclicity condition holds:
  %   \begin{equation*}
  %     \llp\,{(U^{-1}\Fib)} \subset U^{-1}\WE\rlap{ ;}
  %   \end{equation*}
  % \item The left-lifted model structure exists on $\C$ if and only
  %   if the following dual acyclicity condition holds:
  %   \begin{equation*}
  %     \rlp{(V^{-1}\Cof)} \subset V^{-1}\WE\rlap{ .}
  %   \end{equation*}
  % \end{enumerate}  
\end{Thm}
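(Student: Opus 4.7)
The strategy is to construct accessible functorial factorizations on $\C$ and $\K$ directly realizing the lifted weak factorization systems, sidestepping the algebraic machinery whose misuse led to the error in the original argument. Writing $F$ for the left adjoint of $U$ and $W$ for the right adjoint of $V$, all four functors $F, U, V, W$ are accessible, as adjoints between locally presentable categories. It suffices to construct, for each $f$ in $\C$ (respectively in $\K$), a functorial factorization $f = \rho_f \circ \lambda_f$ whose right factor $\rho_f$ lies in $U^{-1}\R$ (respectively whose left factor $\lambda_f$ lies in $V^{-1}\L$) and whose constituent functors are accessible; the complementary class is then forced by the lifting property, and we recover the descriptions $\rl{\L} = \llp(U^{-1}\R)$ and $\ll{\R} = (V^{-1}\L)^\boxslash$ from~\eqref{eq:2}.

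For the right-lift along $U$, I would build the factorization by a transfinite iteration driven by the adjunction $F \dashv U$ together with the given realization $(L, R)$ on $\M$. Starting from $f_0 = f\colon X \to Y$ in $\C$, at each successor stage factor $Uf_\alpha$ in $\M$ as $R(Uf_\alpha) \circ L(Uf_\alpha)$ and form, in $\C$, the pushout of $F(L(Uf_\alpha))$ along the counit at $\mathrm{dom}(f_\alpha)$; this produces a replacement $f_{\alpha+1}$ over $Y$. At limit ordinals take colimits. Since $E$, $F$, and $U$ are all accessible and $\C$ is locally presentable, this iteration should stabilize at some cardinal determined by their accessibility ranks, and a standard fixed-point argument then shows $U\rho_f \in \R$.

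For the left-lift along $V$, I would proceed dually: iterate a pullback construction that factors $Vf_\alpha$ in $\M$ and pulls back the transpose of its right factor along the unit of $V \dashv W$ in $\K$. This is in the spirit of the \cite{makkai-rosicky} small object argument, adapted to the accessible (rather than merely cofibrantly generated) setting. Local presentability of $\K$ together with accessibility of the functors involved would again ensure convergence of the iteration.

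Accessibility of the lifted weak factorization systems would then be immediate, since each functorial factorization is assembled from accessible functors by transfinite composites of pushouts (respectively pullbacks) of bounded length in a locally presentable category. The hardest part is the termination of the left-lift iteration: whereas the right-lift iteration is a direct generalization of Quillen's small object argument and converges by standard accessibility considerations, the dual pullback-based iteration is intrinsically subtler, and verifying its termination in the general accessible setting (rather than merely the cofibrantly generated setting of \cite{makkai-rosicky}) is the essential technical point to establish.
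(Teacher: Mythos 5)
Your proposal has a genuine gap, and it is exactly at the point you yourself flag: the termination of the left-lift iteration. This is not a technical loose end to be tightened later --- it is the entire content of the theorem in the left case, and it cannot be established by dualizing a small-object-type argument. Accessibility and local presentability are not self-dual hypotheses: your right-lift iteration converges (modulo the repair below) because accessible pointed endofunctors on locally presentable categories admit free algebras via Kelly-style transfinite constructions, but the dual statement --- convergence of a pullback-based cotower for a copointed endofunctor --- is false in general, and no cardinality bound coming from the accessibility ranks of $E$, $V$, $W$ will force it. This is precisely why the paper (following Makkai--Ros\'ick\'y and Bourke--Garner) abandons iteration altogether in favour of a non-constructive argument: the category $\cat{Clov}(\L)$ of maps equipped with a lift against their own right factor is shown to be locally presentable with $\ul$ a cocontinuous isofibration, its pullback $\cat{Clov}(\smash{\ll\L})$ along $V^\2$ is again locally presentable with $\ulb$ a left adjoint (Bird's bilimit theorems), and the \emph{counit} of the adjunction $\ulb \dashv G$ supplies the functorial factorization. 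Without this (or some equivalent appeal to the adjoint functor theorem), your proof establishes only the right-lifting half, which was already classical in spirit; the left-lifting half --- the novel part --- remains unproved.

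Two further points. First, your reduction ``the complementary class is then forced by the lifting property'' is too quick: producing a functorial factorization whose left factor lies in $V^{-1}\L$ does \emph{not} automatically place the right factor in $\rlp{(V^{-1}\L)}$, and the retract argument only recovers $\ll\L = \llp{\ll\R}$ \emph{after} both factors are known to lie in the correct classes. In the right-lift case your construction does yield this for free, since the left factor is a transfinite composite of pushouts of maps $F(\L\text{-map})$, which lie in $\llp{(U^{-1}\R)}$ by adjunction; but in the left case the membership of the right factor in $\rlp{(V^{-1}\L)}$ requires a separate argument (in the paper this is Proposition~\ref{prop:6}, which uses Lemma~\ref{lem:3} and terminality of the counit). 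Second, even in the right-lift case the claim that the iteration ``stabilizes'' and that ``a standard fixed-point argument'' finishes is glossed: the naive successor-pushout/limit-colimit sequence for a merely pointed (not well-pointed) endofunctor need not stabilize, and the correct statement is that the accessible pointed endofunctor on $\C^\2$ whose algebras are the cloven $\rl\R$-maps admits free algebras (Kelly's transfinite construction, or Garner's algebraic small object argument); citing that result would repair the right half, but nothing analogous is available for the left half.
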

Using this, we re-find the main Corollary~3.3.4 of~\cite{HKRS}:
\begin{Cor}
  \label{cor:1}
  Let $(\WE, \Cof, \Fib)$ be an accessible model structure on $\M$,
  and suppose given adjunctions~\eqref{eq:10}
  % \begin{equation*}
  %   \xymatrix@C=4pc@R=4pc{ \K \ar@<1ex>[r]^V \ar@{}[r]|\perp & \M
  %     \ar@<1ex>[l]^R \ar@<1ex>[r]^L\ar@{}[r]|\perp & \C
  %     \ar@<1ex>[l]^U}
  % \end{equation*}
  with $\K$ and $\C$ also locally presentable.
  \begin{enumerate}
  \item The right-lifted model structure exists on $\C$ if and only if
    the right acyclicity condition holds.
  \item The left-lifted model structure exists on $\K$ if and only if
    the left acyclicity condition holds.
  \end{enumerate}
\end{Cor}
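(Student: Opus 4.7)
The plan is to deduce Corollary~\ref{cor:1} as an immediate consequence of Theorem~\ref{thm:2} together with Proposition~\ref{prop:4}. By definition, an accessible model structure on $\M$ is carried by a pair of interacting accessible weak factorization systems, namely $(\Cof, \Fib \cap \WE)$ and $(\Cof \cap \WE, \Fib)$. I would first apply Theorem~\ref{thm:2} to each of these in turn: since $\C$ and $\K$ are assumed locally presentable, this produces on $\C$ the right-liftings $(\rl{\Cof}, \rl{\Fib \cap \WE})$ and $(\rl{\Cof \cap \WE}, \rl{\Fib})$ along $U$, and on $\K$ the left-liftings $(\ll{\Cof}, \ll{\Fib \cap \WE})$ and $(\ll{\Cof \cap \WE}, \ll{\Fib})$ along $V$, all four of them again accessible weak factorization systems.

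With these lifted weak factorization systems on hand, the hypothesis of Proposition~\ref{prop:4} is satisfied; part~(1) of that proposition then directly yields part~(1) of the corollary, and part~(2) yields part~(2). Moreover, whenever the lifted model structures do exist, they are themselves accessible in the sense of Definition~\ref{def:3}, since their two underlying weak factorization systems are accessible by Theorem~\ref{thm:2}. There is essentially no obstacle to this deduction: the entire content of the corollary is already packaged into Theorem~\ref{thm:2} (which constructs the lifted accessible functorial factorizations under the local-presentability hypotheses on $\C$ and $\K$) together with Proposition~\ref{prop:4} (the standard bookkeeping that reduces existence of a lifted model structure to existence of its two underlying lifted weak factorization systems plus the relevant acyclicity condition). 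The genuine mathematical work of the paper therefore lies upstream, in the proof of Theorem~\ref{thm:2} itself.
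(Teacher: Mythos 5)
Your proof is correct and is exactly the deduction the paper intends: the corollary follows immediately by applying Theorem~\ref{thm:2} to the two underlying accessible weak factorization systems and then invoking Proposition~\ref{prop:4}, which is why the paper offers no separate argument beyond "Using this, we re-find\dots".
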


\subsection{Cloven $\L$- and $\R$-maps}
\label{sec:cloven-l-maps}
The first proof we give of Theorem~\ref{thm:2} will still employ ideas
derived from~\cite{bourke-garner-awfsI}, but will be given in a fully
self-contained manner with the minimum of additional machinery. The
main notion we require is:

%Note that the classes
% $(\L', \R')$ displayed in~\eqref{eq:2} always satisfy
% $\R' = \rlp{(\L')}$ and $\L' = \rlp{(\R')}$, so the only potential
% obstruction to their forming a weak factorization system is the
% non-existence of factorizations.

% One situation where liftings always exist is the \emph{combinatorial}
% setting: here, the base category $\M$ is \emph{locally
%   presentable}~\cite{Gabriel1971Lokal}, and the weak factorization
% system $(\L, \R)$ thereon is \emph{cofibrantly generated}---meaning,
% as usual, that there is a mere set $I$ of maps such that
% $\R = \rlp{I}$. In this circumstance, given adjunctions
% \begin{equation*}
%   \xymatrix@C=4pc@R=4pc{ \K \ar@<1ex>[r]^V \ar@{}[r]|\perp & \M
%     \ar@<1ex>[l]^R \ar@<1ex>[r]^L\ar@{}[r]|\perp & \C
%     \ar@<1ex>[l]^U}
% \end{equation*}
% with $\K$ and $\C$ also locally presentable, the left-lifting of
% $(\L, \R)$ along $V$ and the right-lifting of $(\L, \R)$ along $U$
% exist and are again cofibrantly generated.
% 
% Our objective in this section is to prove a corresponding lifting
% result for 

\begin{Defn}
  \label{def:1}
  Let $(\L, \R)$ be an accessible weak factorization system on $\M$. A
  \emph{cloven} \emph{$\L$-map} $(f, s)\colon A \rightarrow B$
  comprises a map $f \colon A \rightarrow B$ of $\M$ together with a
  lift of $f$ against its own right factor, as to the left in:
\begin{equation}\label{eq:14}
  \cd{
    {A} \ar[r]^-{Lf} \ar[d]_{f} &
    {Ef} \ar[d]^{Rf} & &
    {C} \ar[d]_-{Lg} \ar@{=}[r] &
    {C} \ar[d]^{g}
    \\
    {B} \ar@{=}[r] \ar@{-->}[ur]_-{s} &
    {B} & &
    {Eg} \ar@{-->}[ur]^-{p} \ar[r]_-{Rg} &
    {D}\rlap{ .}
  }
\end{equation}
Dually, a \emph{cloven $\R$-map} $(g,p) \colon C \rightarrow D$ is a
map $g \colon C \rightarrow D$ together with a lift of $g$ against its
own left factor, as above right. The cloven $\L$-maps are the objects
of a category $\cat{Clov}(\L)$, wherein a morphism
$(f,s) \rightarrow (g,t)$ is a map $(h,k) \colon f \rightarrow g$ in
$\M^\2$ as below left which also renders commutative the square to the
right:
\begin{equation*}
  \cd{
    {A} \ar[r]^-{h} \ar[d]_{f} &
    {C} \ar[d]^{g} & & 
    {Ef} \ar[r]^-{E(h,k)} &
    {Eg} \\
    {B} \ar[r]_-{k} &
    {D} & &
    {B} \ar[r]_-{k} \ar[u]^{s} &
    {D} \ar[u]_{t}\rlap{ .}
  }
\end{equation*}
Dually, the cloven $\R$-maps form a category
$\cat{Clov}(\R)$. We write
$\ul \colon \cat{Clov}(\L) \rightarrow \M^\2$ and
$\ur \colon \cat{Clov}(\R) \rightarrow \M^\2$ for the functors
forgetting the liftings.
\end{Defn}

It will be useful to re-express the above definition in a different
manner. Any functorial factorization~\eqref{eq:6} yields endofunctors
$L, R \colon \M^\2 \rightarrow \M^\2$ and natural transformations
$\vec \eta \colon \id_{\M^\2} \Rightarrow R$ and
$\vec \epsilon \colon L \Rightarrow \id_{\M^\2}$ with components
\begin{equation}\label{eq:9}
  {\vec\eta}_f = (Lf,1) \colon f \rightarrow Rf \qquad \text{and} \qquad {\vec\epsilon}_f = (1,Rf)
  \colon Lf \rightarrow f\rlap{ .}
\end{equation}
In these terms, to endow $g \colon C \rightarrow D$ with cloven
$\R$-map structure is to endow it with a choice of retraction
$\vec p \colon Rg \rightarrow g$ for ${\vec\eta}_g$, or in other
words, with \emph{$(R, \vec \eta)$-algebra} structure. We may thus
identify $\ur$ with the forgetful functor
$\smash{\ealg} \rightarrow \M^\2$ from the category
of $(R, \vec \eta)$-algebras. Similarly, we may identify $\ul$ with the forgetful
functor $\smash{\ecoalg} \rightarrow \M^\2$
from the category of $(L, \vec \epsilon)$-coalgebras.

\begin{Lemma}
  \label{lem:4}
  Let $(\L, \R)$ be an accessible weak factorization system on $\M$.
  \begin{enumerate}
  \item $\ul \colon \cat{Clov}(\L) \rightarrow \M^\2$ is a left
    adjoint isofibration between locally presentable categories, and
    the objects in its image are precisely the $\L$-maps.
  \item $\ur \colon \cat{Clov}(\R) \rightarrow \M^\2$ is a right
    adjoint isofibration between locally presentable categories, and
    the objects in its image are precisely the $\R$-maps.
  \end{enumerate}
\end{Lemma}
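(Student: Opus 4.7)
The plan is to build on the identifications noted immediately before the lemma: $\cat{Clov}(\L) \cong \ecoalg$ and $\cat{Clov}(\R) \cong \ealg$, which reduce both assertions to statements about (co)algebras for the (co)pointed endofunctors $(L, \vec\epsilon)$ and $(R, \vec\eta)$ on $\M^\2$. As a preliminary, $\M^\2$ is locally presentable whenever $\M$ is, the domain/codomain projections $\M^\2 \rightrightarrows \M$ are cocontinuous, and $E$ is accessible by definition of accessible weak factorization system; hence the induced endofunctors $L, R \colon \M^\2 \to \M^\2$ are accessible.

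I would then invoke the standard machinery of accessible category theory. For the $\R$-case, Kelly's transfinite free-algebra construction, applied to the accessible pointed endofunctor $(R, \vec\eta)$ on the locally presentable category $\M^\2$, shows that $\ealg$ is locally presentable and that the forgetful functor admits a left adjoint; hence $\ur$ is a right adjoint between locally presentable categories. The isofibration property is routine: given an iso $(h_0, h_1) \colon f \to \ur(g, p)$ in $\M^\2$, the transported structure $p' := h_0^{-1} \cdot p \cdot E(h_0, h_1)$ satisfies $p' \cdot Lf = \id$ and $f \cdot p' = Rf$ by naturality of the factorization, and makes $(h_0, h_1)$ into an algebra morphism $(f, p') \to (g, p)$. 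The $\L$-case is dual; the requisite cofree-coalgebra construction for accessible copointed endofunctors on locally presentable categories is documented in~\cite{bourke-garner-awfsI}.

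For the image characterization of $\ur$, any cloven $\R$-map $(g, p)$ is an $\R$-map: given $f \in \L$ and a square $(u, v) \colon f \to g$ in $\M^\2$, factoring $f = Rf \cdot Lf$ and using $f \in \llp{\R}$ against $Rf \in \R$ yields a retraction $r \colon B \to Ef$ of $Rf$ with $r \cdot f = Lf$; the composite $d = p \cdot E(u, v) \cdot r$ is then the required diagonal, since $d \cdot f = p \cdot Lg \cdot u = u$ and $g \cdot d = Rg \cdot E(u,v) \cdot r = v \cdot Rf \cdot r = v$ using naturality of the factorization together with $p \cdot Lg = \id$ and $g \cdot p = Rg$. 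Conversely, any $\R$-map $g$ has RLP against $Lg \in \L$, which supplies a $p$ with the required properties. The $\L$-case is dual.

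The main obstacle is the invocation of the cofree-coalgebra construction for an accessible copointed endofunctor on a locally presentable category. The algebra side reduces to Kelly's well-documented transfinite small-object argument, but the coalgebra side resists naive dualization because opposites of locally presentable categories are rarely locally presentable. One must either appeal directly to the construction in~\cite{bourke-garner-awfsI}, or realise $\ecoalg$ as a 2-categorical limit (an inserter together with an equifier) in the 2-category of accessible categories and accessible functors, and invoke the preservation of local presentability by such limits.
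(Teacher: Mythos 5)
Your proof is essentially correct and overlaps substantially with the paper's, but it diverges on two points worth flagging.

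For the image characterization, you prove that a cloven $\R$-map is an $\R$-map by exhibiting the lifting property directly against an arbitrary $\L$-map, whereas the paper uses the shorter retract argument (the cleavage exhibits $g$ as a retract of $Rg$, which is already in $\R$; dually for $\L$). Both are valid, and your direct computation is instructive, but note a terminology slip: the map $r \colon B \to Ef$ you produce satisfies $Rf \cdot r = 1_B$, so it is a \emph{section} of $Rf$, not a retraction.

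For local presentability and the adjoints, the paper argues uniformly: it shows that $\ul$ creates colimits and $\ur$ creates limits, establishes accessibility of both $\ecoalg$ and $\ealg$ by realizing them as bilimits in the 2-category $\cat{ACC}$ of accessible categories (using that $E$, hence $L,R,\vec\epsilon,\vec\eta$, are accessible), and then deduces local presentability by~\cite[Theorem~2.47]{Adamek1994Locally}; the adjoints follow from the adjoint functor theorems of Kelly and Gabriel--Ulmer for (co)continuous functors between locally presentable categories. You instead invoke Kelly's transfinite free-algebra construction on the $\R$-side and then, for the $\L$-side, honestly identify the obstruction (opposites of locally presentable categories are rarely locally presentable) and propose either citing~\cite{bourke-garner-awfsI} or the inserter-and-equifier construction in $\cat{ACC}$. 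The latter is precisely what the paper does, and arguably it is cleaner to run that argument for both sides rather than switching machinery. One small imprecision in your fallback: bilimits in $\cat{ACC}$ preserve \emph{accessibility}, not local presentability outright; you still need to supply (co)completeness separately, which comes from $\ul$ and $\ur$ creating (co)limits, before invoking Adámek--Rosický. With that supplement, your sketch closes the gap.
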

Recall here that a functor $F \colon \A \rightarrow \B$ is an
\emph{isofibration} when, for every isomorphism
$f \colon b \rightarrow Fa$ in $\B$, there exists an isomorphism
$f' \colon a' \rightarrow a$ in $\A$ with $Ff' = f$. These are the
fibrations of the ``folk'' model structure on
$\cat{CAT}$~\cite{Joyal1991Strong}.
\begin{proof}
  It follows from the identification of $\mathsf{Clov}(\L)$ and
  $\mathsf{Clov}(\R)$ with $\smash{\ecoalg}$ and $\smash{\ealg}$ that
  $\ul$ and $\ur$ are isofibrations, that $\ul$ creates colimits and
  that $\ur$ creates limits (cf.~\cite[Theorem
  3.4.2]{Barr1985Toposes}). In particular, $\cat{Clov}(\L)$ is
  complete and $\cat{Clov}(\R)$ cocomplete, and so
  by~\cite[Theorem~2.47]{Adamek1994Locally}, both will be locally
  presentable so long as they are \emph{accessible
    categories}~\cite{Makkai1989Accessible}. We show this using
  Theorem~5.1.6 of \emph{ibid}., which states that the 2-category
  $\cat{ACC}$ of accessible categories and accessible functors is
  closed in $\cat{CAT}$ under bilimits. This implies the accessibility
  of $\cat{Clov}(\L)$ and $\cat{Clov}(\R)$, because the passage from a
  (co)pointed endofunctor to its category of (co)algebras can be
  realized using bilimits (cf.~\cite[Appendix~A]{ching-riehl}), and
  because the accessibility of $E$ implies that $L, R, \vec \eta$ and
  $\vec \epsilon$ all live in $\cat{ACC}$.

  Now $\ul$ is a cocontinous functor between locally presentable
  categories, and so by~\cite[Theorem~5.33]{Kelly1982Basic} has a
  right adjoint; while $\ur$ is a continuous and
  accessible functor between locally presentable
  categories---accessible due to its construction from bilimits
  in $\cat{ACC}$---and so by~\cite[Satz~14.6]{Gabriel1971Lokal} has a
  left adjoint.
  % Finally, any object $g$ in the image of $\ur$ is a retract of the
  % $\R$-map $Rg$, and so itself an $\R$-map; conversely, any $\R$-map
  % $g$ lifts against its own left factor, and so admits a cleavage. So
  % the image of $\ur$ comprises precisely the $\R$-maps, and dually
  % for $\ul$. 

  For the final claim, if $f$ is an $\L$-map, then it lifts against
  $Rf$, and so admits a cleavage; conversely, if $f$ is endowed with a
  cleavage, then it is an $\L$-map as a retract of the $\L$-map $Lf$.
  So the image of $\ul$ comprises precisely the $\L$-maps, and dually
  the image of $\ur$ comprises the $\R$-maps.
\end{proof}
We will require one final result relating to cloven maps. We state it
here only for $\L$-maps, and leave the  dualization to the
right case to the reader.
% \begin{Cor}
%   \label{cor:2}
%   Given a cloven $\L$-map $\bs f$ and a pushout square
%   \begin{equation*}
%     \cd{
%       {A} \ar[r]^-{h} \ar[d]_{f} &
%       {C} \ar[d]^{g} \\
%       {B} \ar[r]^-{k} & 
%       {D} \pushoutcorner
%     }
%   \end{equation*}
%   there is a unique cleavage for $g$ such that $(h,k) \colon \bs f
%   \rightarrow \bs g$ in $\cat{Clov}(\L)$.
% \end{Cor}
% \begin{proof}
%   There is a pushout square in $\E^\2$ as on the left in:
%   \begin{equation*}
%     \cd{
%       {1_A} \ar[r]^-{(1,f)} \ar[d]_{(h,h)} &
%       {f} \ar[d]^{(h,k)} & &
%       (1_A, L1_A) \ar[r]^-{(1,f)} \ar[d]_-{(h,h)} & (f,s) \\
%       {1_C} \ar[r]^-{(1,g)} &
%       {g} & &
%       (1_C, L1_C)
%     }
%   \end{equation*}
%   whose span lifts to a span in $\cat{Clov}(\L)$ as right above; now
%   apply Lemma~\ref{lem:4}.
% \end{proof}

\begin{Lemma}
  \label{lem:3}
  Let $(f,s) \colon A \rightarrow B$ be a cloven $\L$-map and
  $g \colon B \rightarrow C$ an $\L$-map. There is a cleavage $t$ for
  $gf \colon A \rightarrow C$ such that
  $(1,g) \colon (f,s) \rightarrow (gf,t)$ in $\cat{Clov}(\L)$.
\end{Lemma}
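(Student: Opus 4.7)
The plan is to construct $t$ as a solution to a lifting problem of $g$ against $R(gf)$, which is available because $g$ is an $\L$-map and $R(gf)$ is an $\R$-map.

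To set up this lifting problem, consider the square with top edge $E(1,g) \circ s \colon B \to E(gf)$, left edge $g \colon B \to C$, bottom edge $1_C$, and right edge $R(gf) \colon E(gf) \to C$. By naturality of $\vec\eta$ applied to the morphism $(1,g) \colon f \to gf$ in $\M^\2$, one has $R(gf) \circ E(1,g) = g \circ Rf$; combining this with the cleavage identity $Rf \circ s = 1_B$ gives $R(gf) \circ E(1,g) \circ s = g$, so the square commutes. Since $g \in \L$ and $R(gf) \in \R$, we may choose a diagonal filler $t \colon C \to E(gf)$ satisfying
\[
t \circ g = E(1,g) \circ s \qquad \text{and} \qquad R(gf) \circ t = 1_C.
\]

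It remains to verify that $t$ is a cleavage of $gf$ and that $(1,g) \colon (f,s) \to (gf, t)$ is a morphism in $\cat{Clov}(\L)$. The identity $R(gf) \circ t = 1_C$ is one of the two cleavage equations; the other, $t \circ gf = L(gf)$, follows by precomposing $t \circ g = E(1,g) \circ s$ with $f$ and then using $s \circ f = Lf$ together with the functoriality identity $E(1,g) \circ Lf = L(gf)$. Finally, the morphism condition in $\cat{Clov}(\L)$ for $(1,g) \colon (f,s) \to (gf,t)$ is precisely the equation $t \circ g = E(1,g) \circ s$, which holds by construction.

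There is no real obstacle here: once one spots that the correct lifting square is given by $E(1,g) \circ s$ against $R(gf)$, all three required equations (the two cleavage conditions and the morphism compatibility) fall out of a single invocation of the lifting property together with functoriality of $(L,R)$ and the original cleavage of $f$.
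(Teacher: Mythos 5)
Your proof is correct and is essentially the same as the paper's: both construct $t$ as a diagonal filler for the square with top edge $E(1,g)\circ s$, left edge $g$, and right edge $R(gf)$, and then read off the two cleavage identities and the morphism compatibility from the filler equations together with $sf = Lf$ and $E(1,g)\circ Lf = L(gf)$. Your version adds only a slightly more explicit justification of why the square commutes and why a filler exists, but the argument is the same.
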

\begin{proof}
  Take $t$ be any filler for the square
  \begin{equation*}
    \cd[@C+1em]{
      {B} \ar[r]^-{E(1,g).s} \ar[d]_{g} &
      {E(gf)} \ar[d]^{R(gf)} \\
      {C} \ar@{=}[r] \ar@{-->}[ur]_-{t}&
      {C}\rlap{ .}
    }
  \end{equation*}
  We have $R(gf) \circ t = 1_C$ and
  $tgf = E(1,g) \circ sf = E(1,g) \circ Lf = L(gf)$, so $t$ is a
  cleavage. Moreover, $(1,g) \colon (f,s) \rightarrow (gf,t)$ is a map
  in $\cat{Clov}(\L)$ by commutativity of the top triangle above.
\end{proof}

\subsection{Lifting accessible weak factorization systems}
\label{sec:lift-access-weak}

We are now ready to give our first proof of Theorem~\ref{thm:2}. % It will ease
% the notation if we write
% \begin{equation*}
%   (\check \L, \check \R) = (V^{-1}\L, \rlp{(V^{-1}\L)}) \qquad
%   \text{and} \qquad (\hat \L, \hat \R) = (\llp\,{(U^{-1}\R)}, U^{-1}\R)
% \end{equation*}
% for the lifted in the two cases.
% We extend the
% notation established there by writing $\bar \L = V^{-1}\L$ and
% $\bar \R = U^{-1}\R$; thus the two lifted weak factorization systems
% we seek in~\eqref{eq:2} are given by $(\llp\,\bar \R, \bar \R)$ and
% $(\bar \L, \rlp{\bar \L})$ respectively.
% Given $F \colon \D \rightarrow \E$ and a weak factorization
% system $(\L, \R)$ on $\E$, we define classes of maps in $\D$ by
% $\bar \L = \{ f : Ff \in \L \}$ and $\bar \R = \mathrm{RLP}(\bar \L)$.
% We aim to prove:
% \begin{Thm}
%   \label{thm:1}
%   Let $F \colon \D \rightarrow \E$ be a left adjoint between locally
%   presentable categories. If $(\L, \R)$ is an accessible wfs on $\E$,
%   then $(\bar \L, \bar \R)$ is also one on $\D$.
% \end{Thm}
In order to exhibit the desired factorizations into the lifted
classes, we consider the following pullback diagrams:
\begin{equation}
  \label{eq:3}
  \cd{
    {\cat{Clov}(\ll\L)} \ar[r]^-{} \ar[d]_{\ulb} \pullbackcorner &
    {\cat{Clov}(\L)} \ar[d]^{\ul} & &
    {\cat{Clov}(\rl\R)} \ar[r]^-{} \ar[d]_{\urb} \pullbackcorner &
    {\cat{Clov}(\R)} \ar[d]^{\ur} \\
    {\K^\2} \ar[r]_-{V^\2} &
    {\M^\2} & &
    {\C^\2} \ar[r]_ -{U^\2} &
    {\M^\2\rlap{ .}}
  }
\end{equation}
The notation for the categories defined by these pullbacks is slightly
abusive; the meaning cannot be the one asserted by
Definition~\ref{def:1}, since we do not yet have functorial
factorizations for $\smash{(\ll \L, \ll \R)}$ or
$\smash{(\rl \L, \rl \R)}$. Indeed, the whole point is to find such
factorizations, and we will do this with the aid of the above
pullbacks.

The abuse of notation is justified by the observation that an object
of, say, $\cat{Clov}(\smash{\ll{\L}})$ is a pair $(f,s)$ where $f$ is
a map of $\K$ and $s$ is a cleavage for $Vf$---thus, by
Lemma~\ref{lem:4}, a witness that $Vf$ is an $\L$-map and so equally,
a witness that $f$ is an $\smash{\ll \L}$-map. This proves the final
clauses in the two parts of the following result.

\begin{Lemma}
  \label{lem:new4}
  Let $(\L, \R)$ be an accessible weak factorization system on $\M$,
  and suppose given adjunctions~\eqref{eq:10} with $\C$ and $\K$ also
  locally presentable.
  \begin{enumerate}
  \item $\ulb \colon \cat{Clov}(\ll\L) \rightarrow \K^\2$ is a left
    adjoint isofibration between locally presentable categories, and
    the objects in its image are precisely the $\smash{\ll\L}$-maps.
  \item $\urb \colon \cat{Clov}(\rl\R) \rightarrow \C^\2$ is a right
    adjoint isofibration between locally presentable categories, and
    the objects in its image are precisely the $\smash{\rl\R}$-maps.
  \end{enumerate}
\end{Lemma}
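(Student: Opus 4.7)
The plan is to transfer the properties established in Lemma~\ref{lem:4} across the strict pullback squares~\eqref{eq:3}, exploiting that $\ul$ and $\ur$ are isofibrations so that these strict pullbacks coincide with bipullbacks in $\cat{CAT}$. The argument runs in two parallel threads for $\ulb$ and $\urb$: accessibility, then (co)completeness and local presentability, then the isofibration and adjoint conditions, and finally the image identification.

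First I would argue that $\cat{Clov}(\ll\L)$ and $\cat{Clov}(\rl\R)$ are accessible. Since $V$ is a left and $U$ a right adjoint between locally presentable categories, both are accessible, and so are $V^\2$ and $U^\2$; combined with the accessibility of $\ul$ and $\ur$ from Lemma~\ref{lem:4}, and with the closure of $\cat{ACC}$ in $\cat{CAT}$ under bilimits, the bipullbacks~\eqref{eq:3} land in $\cat{ACC}$.

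Next I would handle (co)completeness via the isofibration property. Because $\ul$ is an isofibration and creates colimits (Lemma~\ref{lem:4}), the pullback functor $\ulb$ creates colimits from $\K^\2$, so $\cat{Clov}(\ll\L)$ is cocomplete and $\ulb$ preserves colimits; dually, $\urb$ creates limits, so $\cat{Clov}(\rl\R)$ is complete and $\urb$ preserves limits. Together with accessibility, \cite[Theorem~2.47]{Adamek1994Locally} yields local presentability in both cases. Pullbacks of isofibrations being isofibrations (a standard fact about the folk model structure on $\cat{CAT}$, or checked directly) then gives the isofibration claim. The adjoint claims follow by standard machinery: $\ulb$ is cocontinuous between locally presentable categories, so \cite[Theorem~5.33]{Kelly1982Basic} furnishes a right adjoint; while $\urb$ is continuous and accessible between locally presentable categories, so \cite[Satz~14.6]{Gabriel1971Lokal} furnishes a left adjoint.

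Finally, the image identification falls out of the pullback description: an object of $\cat{Clov}(\ll\L)$ is precisely a pair $(f,s)$ with $f$ a map of $\K$ and $s$ a cleavage for $V\!f$, which by Lemma~\ref{lem:4} exists if and only if $V\!f$ is an $\L$-map, i.e.\ if and only if $f \in V^{-1}\L = \ll\L$ by~\eqref{eq:2}; dually for $\urb$. I do not expect a serious obstacle here: the main piece of bookkeeping is verifying that pullback along the isofibrations $\ul$ and $\ur$ really does create the relevant (co)limits and transfer accessibility, but this reuses exactly the kind of 2-categorical argument already deployed in the proof of Lemma~\ref{lem:4}.
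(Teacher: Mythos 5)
Your argument is correct, and it shares the paper's key opening move -- since $\ul$ and $\ur$ are isofibrations, the strict pullbacks \eqref{eq:3} are bipullbacks in $\cat{CAT}$ -- but from there it takes a more hands-on route. The paper finishes in one stroke by citing Bird's theorems that the $2$-categories of locally presentable categories with left (resp.\ right) adjoint functors are closed under bilimits in $\cat{CAT}$: since $\ul$ and $V^\2$ (resp.\ $\ur$ and $U^\2$) are left (resp.\ right) adjoints between locally presentable categories, the bipullback is locally presentable and $\ulb$ (resp.\ $\urb$) is a left (resp.\ right) adjoint, with no separate appeal to adjoint functor theorems. You instead re-run the proof of Lemma~\ref{lem:4} one level up: closure of $\cat{ACC}$ under bilimits for accessibility, creation of (co)limits through the pullback for (co)completeness, \cite[Theorem~2.47]{Adamek1994Locally} for local presentability, and then Kelly~5.33 and Gabriel--Ulmer~14.6 to manufacture the adjoints. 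This is more elementary and self-contained (it only reuses machinery already deployed in Lemma~\ref{lem:4}), at the cost of being longer and of having to verify the (co)limit-creation step yourself; the paper's appeal to Bird buys both local presentability and the adjoint property simultaneously. One small correction to your bookkeeping: the fact that $\ulb$ creates colimits is not a consequence of $\ul$ being an isofibration -- the isofibration property is only needed to identify the strict pullback with the bipullback -- but rather of $\ul$ creating colimits \emph{together with} $V^\2$ preserving them (which holds because $V$ is a left adjoint); dually, $\urb$ creates limits because $\ur$ does and $U^\2$ preserves them. With that attribution fixed, the step goes through, and your treatment of the image clauses and of the isofibration claim (pullbacks of isofibrations are isofibrations) agrees with the paper's.
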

\begin{proof}
  It remains to prove the first clauses. By Lemma~\ref{lem:4}, $\ul$
  is an isofibration; whence by~\cite{Joyal1993Pullbacks}, its
  pullback along $V^\2$ is also a bipullback (= homotopy pullback in
  $\cat{CAT}$). By~\cite[Theorem~3.15]{Bird1984Limits}, the
  $2$-category of locally presentable categories and left adjoint
  functors is closed under bilimits in $\cat{CAT}$, so that, in
  particular, $\smash{\ulb}$ is a left adjoint between locally
  presentable categories. Similarly,
  by~\cite[Theorem~2.18]{Bird1984Limits}, the $2$-category of locally
  presentable categories and \emph{right} adjoint functors is closed
  under bilimits in $\cat{CAT}$, and so $\smash{\urb}$ is a right
  adjoint between locally presentable categories.
  % it follows that the image of
  % $\ulb \colon \cat{Clov}(\smash{\ll{\L}}) \rightarrow \K^\2$
  % comprises precisely the $\smash{\ll{\L}}$-maps, and dually the
  % objects in the image of $\urb$ are the $\smash{\rl{\R}}$-maps; this
  % justifies the notation.
\end{proof}
We now show that the adjoints asserted by this lemma provide the
desired functorial $\smash{(\ll \L, \ll \R)}$- and
$\smash{(\rl \L, \rl \R)}$-factorizations. The argument from this
point is completely dualizable, so we concentrate on the case of
left-lifting.

\begin{Prop}
  \label{prop:2}
  Under the hypotheses of Theorem~\ref{thm:2}, the counit of the
  adjunction
  $\ulb \dashv G \colon \K^\2 \rightarrow \cat{Clov}(\smash{\ll
    \L})$ at an object $f$ may be taken to be of the form:
  \begin{equation}\label{eq:1}
    \cd{
      {X} \ar@{=}[r]^-{} \ar[d]_{\ll Lf} &
      {X} \ar[d]^{f} \\
      {\ll Ef} \ar[r]_-{\ll Rf} & 
      {Y}
    }
  \end{equation}
\end{Prop}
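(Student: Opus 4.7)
The plan is to modify an arbitrary right adjoint $G_0$ guaranteed by Lemma~\ref{lem:new4} into a right adjoint $G$ with counit of the prescribed form, using a pushout construction guided by the universal property of $G_0$ at $1_X$.

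Fix $f \colon X \to Y$ in $\K$ and write $G_0 f = (g \colon A \to B, s)$ with counit $\epsilon^0_f = (\alpha, \beta) \colon g \to f$ in $\K^\2$. The identity $1_X$ is an $\ll\L$-map with canonical cleavage $L(1_{VX})$ for $1_{VX}$, so the morphism $(1_X, f) \colon 1_X \to f$ in $\K^\2$ corresponds under $\ulb \dashv G_0$ to a morphism $\phi \colon (1_X, L(1_{VX})) \to G_0 f$ in $\cat{Clov}(\ll\L)$; writing $\ulb \phi = (h, gh) \colon 1_X \to g$, composition with $\epsilon^0_f$ yields $\alpha h = 1_X$, exhibiting $h \colon X \to A$ as a section of $\alpha$.

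Form the pushout of $g$ along $\alpha$ in $\K$, producing $p \colon X \to P$, $q \colon B \to P$, and---from $f \alpha = \beta g$---an induced $\beta' \colon P \to Y$ with $\beta' p = f$ and $\beta' q = \beta$. Since $V$ is a left adjoint it preserves this pushout; as $Vg \in \L$ and the left class of a weak factorization system is closed under pushouts, $Vp \in \L$, so $p$ is an $\ll\L$-map. A cleavage $s' \colon VP \to E(Vp)$ is induced by the pushout universal property applied to the cocone $\bigl(E(V\alpha, Vq) \cdot s,\; L(Vp)\bigr)$, whose coherence on $VA$ reduces to the cleavage identity $s \cdot Vg = L(Vg)$ together with naturality of $L$; the pushout uniqueness also yields $R(Vp) \cdot s' = 1_{VP}$, so $(p, s') \in \cat{Clov}(\ll\L)$, and a direct check shows $(\alpha, q) \colon G_0 f \to (p, s')$ is a morphism in $\cat{Clov}(\ll\L)$.

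Finally, I would verify that $Gf := (p, s')$ with counit $(1_X, \beta') \colon p \to f$ is the value of a right adjoint to $\ulb$. Given any $(g', t') \in \cat{Clov}(\ll\L)$ and morphism $\psi \colon g' \to f$ in $\K^\2$, one first lifts $\psi$ via $\ulb \dashv G_0$ to a morphism $\psi_0 \colon (g', t') \to G_0 f$ in $\cat{Clov}(\ll\L)$, then postcomposes with $(\alpha, q)$ to obtain a morphism $(g', t') \to (p, s')$ in $\cat{Clov}(\ll\L)$ whose image under the counit is $\psi$; functoriality and naturality are inherited from $G_0$ and from the pushout construction. The main obstacle is the uniqueness half of the universal property: one must show that any two candidate lifts $(\gamma, \delta^\#_1), (\gamma, \delta^\#_2) \colon (g', t') \to (p, s')$ agreeing on $\psi$ via the counit must coincide. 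This is achieved by exploiting the explicit assembly of $s'$ via the pushout cocone: the cleavage-compatibility equation $E(V(\gamma, \delta^\#_i)) \cdot t' = s' \cdot V \delta^\#_i$, combined with the equations $\delta^\#_i \cdot g' = p \cdot \gamma$ and $\beta' \cdot \delta^\#_i = \text{bot}\,\psi$, is shown to force agreement of $\delta^\#_1$ and $\delta^\#_2$ by invoking pushout uniqueness on a carefully constructed pair of cocones on $VP$.
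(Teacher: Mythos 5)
Your construction is sound up to and including the existence half of the universal property: $Vp\in\L$ because $V$ preserves the pushout and the left class of a weak factorization system is closed under pushout, the cocone $\bigl(L(Vp),\,E(V\alpha,Vq)\cdot s\bigr)$ does define a cleavage $s'$ with $s'\cdot Vp=L(Vp)$ and $R(Vp)\cdot s'=1_{VP}$, the pair $(\alpha,q)$ is a morphism of cloven maps, and every $\psi\colon\ulb(g',t')\to f$ factors through $(1_X,\beta')$ as you describe. The genuine gap is the uniqueness half of the universal property, which is where the entire content of the proposition lies, and the tool you invoke cannot deliver it. The ambiguity to be excluded concerns two morphisms $\delta^\#_1,\delta^\#_2\colon\mathrm{cod}(g')\to P$ \emph{into} the pushout, and they live in $\K$; the universal property of a pushout only forces equality of maps \emph{out of} $P$ (or $VP$), so ``pushout uniqueness applied to cocones on $VP$'' has no purchase on them. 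Even if you could identify suitable composites after applying $V$, that would be an equation in $\M$, and $V$ is not assumed faithful, so it would not yield $\delta^\#_1=\delta^\#_2$ in $\K$.

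Note moreover that terminality of your candidate $(1_X,\beta')\colon\ulb(p,s')\to f$ in $\ulb\downarrow f$ is essentially equivalent to invertibility of $\alpha$: terminality of $\epsilon^0_f$ produces $\theta\colon(p,s')\to G_0f$ with $\epsilon^0_f\circ\ulb\theta=(1_X,\beta')$ and, by the uniqueness clause, $\theta\circ(\alpha,q)=1_{G_0f}$; terminality of your candidate would then force $(\alpha,q)\circ\theta=1_{(p,s')}$, i.e.\ $(\alpha,q)$ invertible. So any completion of your argument must in effect prove that $\alpha$ is an isomorphism, which is exactly the paper's (shorter) proof and renders the pushout superfluous. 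You are in fact one step away from it: you already have $\alpha h=1_X$. Now compare the two morphisms $(1_A,g)$ and $\phi\circ(\alpha,\alpha)\colon(1_A,L1_{VA})\to G_0f$ in $\cat{Clov}(\ll\L)$ (the first is a cloven-map morphism by the cleavage identity $s\cdot Vg=L(Vg)$ and naturality of $L$, the second by naturality of $L$); both compose with $\epsilon^0_f=(\alpha,\beta)$ to $(\alpha,f\alpha)=(\alpha,\beta g)$, so the uniqueness clause of terminality of $\epsilon^0_f$ gives $h\alpha=1_A$. Hence $\alpha$ is invertible, and one simply transports $G_0f$ along it to obtain the counit in the stated form, which is the paper's argument.
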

\begin{proof}
  It suffices to prove that, for any right adjoint $G$ for $\ulb$, the
  counit maps $\ulb G f \rightarrow f$ have invertible
  domain-components; then we may transport the values of $G$ along
  these invertible maps to get a right adjoint with counit as
  in~\eqref{eq:1}.
  
  So let $(g,s) \in \cat{Clov}(\ll \L)$ be the value at $f$ of some
  right adjoint for $\ulb $, with the corresponding counit map given by
  the square in $\K$ left below.
  \begin{equation*}
    \cd[@C+0.3em]{
      {X'} \ar[r]^-{x} \ar[d]_-{g} &
      {X} \ar[d]^-{f} &
      1_{X'} \ar[d]_-{(x, x)} \ar[r]^-{(1,g)} & g \ar[d]^-{(x,y)} &
\ulb (1_{X'},L1_{FX'}) \ar[d]_-{\ulb (x, x)} \ar[r]^-{\ulb (1,g)} &
\ulb (g,s) \ar[d]^-{(x,y)}
      \\
      {Y'} \ar[r]_-{y} &
      {Y} &
      1_X \ar[r]_-{(1,f)} & f &
\ulb (1_X, L1_{FX}) \ar[r]_-{(1,f)} \ar@{-->}[ur]|-{\ulb (z,w)} & f
    }
    % & 
    %   {X'} \ar[r]^-{\epsilon_1} \ar[d]^-{g} & X \ar@{=}[r] \ar[d]^-{\bar L f} &
    %   {X} \ar[d]^-{f}  &
    %   % &{X'} \ar[r]^-{h_f} \ar[d]_-{Cf} & X \ar[r]^-{m_f} \ar[d]^-{\bar
    %   % L f} & X' \ar[d]^-{Cf}\ar[r]^-{h_f} & {X} \ar[d]^-{f}
    %         W\bs Cf \ar[r]^-{W(\epsilon_1, \epsilon_3)} \ar[dr]_-{(\epsilon_1,
    %     \epsilon_2)} & W\bs{\bar
    %   L}f \ar[d]|-{(1, \bar Rf)} \ar[r]^-{W(\delta_1,
    %     \delta_2)} & W\bs Cf \ar[dl]^-{(\epsilon_1, \epsilon_2)}
    %   \\
    %   {Y'} \ar[r]_-{\epsilon_3} & \bar Ef \ar[r]_-{\bar Rf} \pushoutcorner
    %   &
    %   {Y} & & f
    %   % &{Y'} \ar[r]_-{\ell_f} & \bar Ef  \pushoutcorner \ar[r]_-{n_f} &
    %   % Y' \ar[r]_-{k_f} & {Y}\rlap{ .}
    % }
  \end{equation*}
  This square in $\K$ yields one in $\K^\2$ as centre above, and a
  short calculation shows that we can lift its top and left sides to
  $\cat{Clov}(\ll{\L})$ as in the solid square right above. Since
  the counit $(x,y)$ is, by definition, terminal in the comma category
  $\ulb \downarrow f$, we induce a unique diagonal filler as displayed
  making both triangles commute. In particular, both $zx = 1$ and
  $xz = 1$ so $x$ is invertible as desired.
\end{proof}
The naturality of the counit means that the factorization
$f \mapsto (\ll Lf, \ll Rf)$ in~\eqref{eq:1} is functorial; and
in fact, as the notation suggests, we have:
\begin{Prop}
  \label{prop:6}
  Under the hypotheses of Theorem~\ref{thm:2}, the
  factorization~\eqref{eq:1} is a functorial
  $(\ll \L, \ll \R)$-factorization.
\end{Prop}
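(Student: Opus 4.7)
The proposition asserts three things: (i) the assignment $f \mapsto (\ll Lf, \ll Rf)$ is functorial; (ii) every $\ll Lf$ is in $\ll \L$; and (iii) every $\ll R f$ is in $\ll \R$. I will dispatch (i) and (ii) quickly; the real work lies in (iii). For (i), functoriality is automatic from the naturality of the counit $\varepsilon : \ulb G \Rightarrow 1_{\K^\2}$ of Proposition~\ref{prop:2}, whose normalised components $\varepsilon_f = (1_X, \ll Rf)$ determine both $\ll L = \ulb G$ and $\ll R$ as functors and make the factorization $f = \ll Rf \cdot \ll Lf$ natural in $f$. For (ii), $\ll L f = \ulb G f$ lies in the image of $\ulb$, which by Lemma~\ref{lem:new4}(1) is exactly $\ll\L$.

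For (iii), I would first reduce to showing that $\ll Rf$ has the right lifting property against every map of the form $\ll L g$ for $g \in \K^\2$. Given $\ell \in \ll\L$, pick a cleavage $s$ giving $(\ell, s) \in \cat{Clov}(\ll \L)$; then the triangle identity $\varepsilon_\ell \cdot \ulb \eta_{(\ell, s)} = 1_\ell$ for the adjunction $\ulb \dashv G$ exhibits $\ell$ as a retract of $\ll L \ell$ in $\K^\2$, with splitting $\ulb \eta_{(\ell,s)}$ and retraction $\varepsilon_\ell$. Since the right lifting property is preserved under retracts on the left, RLP of $\ll Rf$ against every $\ll L g$ entails RLP against every $\ll\L$-map.

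To verify the remaining case, given a square $(u, v) : \ll L g \to \ll Rf$, I would build the filler into $\ll E f$ by invoking the terminality of the counit $\varepsilon_f = (1_X, \ll Rf) : \ll L f \to f$ in the comma category $\ulb \downarrow f$, applied to a morphism into $f$ (not merely into $\ll Rf$) that encodes the given square. Producing such a morphism is where Lemma~\ref{lem:3} enters: it extends the canonical cleavage $\sigma_g$ on $\ll Lg = \ulb G g$ along an appropriate composite of $\ll\L$-maps to yield a cloven $\ll\L$-map sitting over $f$ in $\K^\2$, whose image under $\varepsilon_f$ recovers $(u,v)$ upon post-composition with $\ll Rf$. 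The principal obstacle is precisely the ``shift'' subtlety identified in the introduction as the source of the error in~\cite{HKRS}: a naive direct application of the adjunction $\ulb \dashv G$ at $\ll R f$ would yield a candidate filler landing in $\ll E(\ll Rf)$ rather than in the required $\ll E f$, and the Lemma~\ref{lem:3} construction is exactly what reroutes the lifting problem through $f$ so that the universal property of $\varepsilon_f$ supplies a filler into the correct middle object.
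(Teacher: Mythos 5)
Your treatment of the easy parts is fine: functoriality from naturality of the counit, $\ll Lf \in \ll\L$ via Lemma~\ref{lem:new4}, and the retract reduction (by the triangle identity every $\smash{\ll\L}$-map, once given a cleavage, is a retract in $\K^\2$ of some $\ll Lg$) is valid, though it does not actually lighten the load. The gap is in the heart of the matter: producing a filler for a lifting problem against $\ll Rf$. Given a square $(u,v)\colon \ll Lg \to \ll Rf$, you propose to extend the canonical cleavage of $\ll Lg$ along a composite of $\smash{\ll\L}$-maps so as to obtain a cloven map ``sitting over $f$'' and then invoke terminality of $\varepsilon_f$ in $\ulb\downarrow f$. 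But any cloven map obtained by extending $\ll Lg$ has domain $W=\mathrm{dom}(g)$, and an object of $\ulb\downarrow f$ with that domain requires a map $W \to X$ compatible with the data; the square $(u,v)$ only supplies $u\colon W \to \ll Ef$ and $v \colon \ll Eg \to Y$, and in general no map $W \to X$ exists at all (e.g.\ $X$ initial, $W$ not). So there is no ``morphism into $f$ encoding the given square'' to which terminality at $f$ could be applied, and the step you explicitly decline to take---going through $\ll Rf$---cannot be avoided this way.

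The paper's proof uses terminality twice, in two different comma categories, and your sketch conflates them. First, Lemma~\ref{lem:3} is applied to extend the cleavage $s$ of $\ll Lf$ (not of $\ll Lg$) along the $\smash{\ll\L}$-map $\ll L\ll Rf$; since the composite $\ll L\ll Rf\cdot \ll Lf$ is based at $X$, the square $(1_X,\ll R\ll Rf)$ exhibits it as an object of $\ulb\downarrow f$, and comparison with the terminal object $\varepsilon_f$ yields a map $\mu\colon \ll E\ll Rf\to \ll Ef$ with $\mu\cdot\ll L\ll Rf=1$ and $\ll Rf\cdot\mu=\ll R\ll Rf$, i.e.\ a solution of the single generic lifting problem. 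Second, an arbitrary lifting problem from a cloven $\smash{\ll\L}$-map (in particular from your $\ll Lg$ with its canonical cleavage) is factored through the counit $(1,\ll R\ll Rf)\colon \ll L\ll Rf\to\ll Rf$ by terminality in $\ulb\downarrow \ll Rf$, and the desired filler is $\mu$ composed with the resulting comparison map into $\ll E\ll Rf$. Without constructing $\mu$, and without the factorization through the counit at $\ll Rf$, your argument has no mechanism that actually outputs a diagonal $\ll Eg \to \ll Ef$, so the claim $\ll Rf\in\ll\R$ remains unproved.
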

\begin{proof}
  The diagram~\eqref{eq:1} provides the counit at $f \colon X
  \rightarrow Y$ of an adjunction $\ulb \dashv G \colon
  \K^\2 \rightarrow \cat{Clov}(\ll\L)$. In particular, each $\ll Lf =
  \ulb Gf$ is in the image of $\ulb$ and so is an $\ll \L$-map by Lemma \ref{lem:new4}. It remains to show each $\ll Rf \in \ll \R$. 

  We can write $Gf = (\ll Lf, s)$, where $s$ is a cleavage for
  $V\ll Lf \colon VX \rightarrow V \ll Ef$. Now since
  $V \ll L \ll R f \colon V \ll E f \rightarrow V \ll E \ll R f$ is an
  $\L$-map, there is by Lemma~\ref{lem:3} a cleavage $t$ for
  $V(\ll L \ll Rf.\ll Lf)$ such that
  $(1,\ll L \ll R f) \colon (\ll Lf, s) \rightarrow (\ll L \ll Rf.\ll
  Lf, t)$ in $\cat{Clov}(\ll \L)$. This gives a square as to the left
  of:
  \begin{equation*}
    \cd{
      \ulb (\ll Lf, s) \ar[r]^-{\ulb (1,1)} \ar[d]_-{\ulb (1, \ll L \ll Rf)} &
      \ulb (\ll Lf, s) \ar[d]^-{(1, \ll Rf)} & &
      \ll Ef \ar@{=}[r] \ar[d]_-{\ll L \ll Rf} & \ll Ef
      \ar[d]^-{\ll Rf}
      \\
      \ulb (\ll L \ll R f. \ll Lf, t) \ar[r]_-{(1, \ll R \ll Rf)}
      \ar@{-->}[ur]|-{\ulb (1, \mu)}&
      f & &
      \ll E \ll Rf \ar@{-->}[ur]^-{\mu} \ar[r]_-{\ll R \ll Rf} & Y
    }
  \end{equation*}
  in $\K^\2$. Since the counit is terminal in
  $\ulb \downarrow f$, we induce a unique diagonal filler as
  displayed making both triangles commute; and on taking the codomain
  projection, we obtain the commuting diagram above right. We are now
  ready to show that $\ll Rf \in \ll \R = \rlp{\ll \L}$. So suppose
  $\ell \in \ll \L$, and we are given a square as to the left in:
  \begin{equation*}
    \cd{
      {A} \ar[r]^-{h} \ar[d]_{\ell} &
      {\ll Eg} \ar[d]^{\ll Rg} & &
      {A} \ar[r]^-{h} \ar[d]_{\ell} &
      {\ll Eg} \ar[d]^-{\ll L\ll Rf} \ar@{=}[r]^-{} & \ll Ef
      \ar[d]^-{\ll Rf}
      \\
      {B} \ar[r]_-{k} &
      {Y} & &
            {B} \ar[r]_-{k'} &
      {\ll E \ll Rf} \ar[r]_-{\ll R \ll Rf} & Y\rlap{ .}    }
  \end{equation*}
  Choose a cleavage $r$ for $V\ell$. By terminality of the counit
  $(1, \ll R \ll Rf) \colon \ll L \ll R f \rightarrow \ll R f$ in
  $\ulb \downarrow \ll Rf$, we may now factor
  $(h,k) \colon \ulb(\ell,r) \rightarrow \ll Rf$ as right above, and
  so obtain the desired filler as
  $\mu k' \colon B \rightarrow \ll Ef$.
\end{proof}
Putting the above results together, we obtain:
\begin{proof}[Proof of Theorem~\ref{thm:2}]
  In the left-lifted case, Proposition~\ref{prop:6}
  exhibits~\eqref{eq:1} as a functorial
  $(\ll \L, \ll \R)$-factorization, so that the lifted weak
  factorization system exists. To show accessibility, it suffices to
  show that $\ll E$ in~\eqref{eq:1} is an accessible functor; but this
  is so since it is the composite of accessible functors:
  \begin{equation*}
    \K^\2 \xrightarrow{G} \cat{Clov}(\ll \L) \xrightarrow{\ulb}
    \K^\2 \xrightarrow{\mathrm{cod}} \K\rlap{ .}
  \end{equation*}
  The case of right-lifting is entirely dual.
\end{proof}

\section{The previous proof}
\label{sec:previous-proof}

In the rest of the paper, we revisit the proof of Theorem~\ref{thm:2}
given in~\cite{HKRS} in order to explain where it goes wrong, and to
suggest a way of fixing it. This proof starts from a different, though
equivalent, formulation of accessibility for a model structure, given
in terms of the \emph{algebraic weak factorization systems}
of~\cite{Grandis2006Natural}, and we begin by explaining this.

\subsection{Accessible algebraic weak factorization systems}
\label{sec:algebr-weak-fact}
Lemma~\ref{lem:4} tells us that we can recapture a weak factorization
system $(\L, \R)$ from any of its functorial realizations $(L,R)$:
indeed, $\L$ and $\R$ are the classes of maps admitting
$(L, \vec \epsilon)$-coalgebra, respectively
$(R, \vec \eta)$-algebra, structure. However, not every functorial
factorization realizes a weak factorization system; the additional
structure required to ensure this was identified
in~\cite[Theorem~2.4]{Rosicky2002Lax-factorization}:

\begin{Lemma}
  \label{lem:1}
  A functorial factorization~$(L,R)$ realises a weak factorization
  system $(\L, \R)$ if and only if each $Lf$ admits
  $(L, \vec \epsilon)$-coalgebra structure and each $Rf$ admits
  $(R, \vec \eta)$-algebra structure.
    % There exist choices of fillers as below for each
    % $f \in \M^\2$:
    % \begin{equation}\label{eq:8}
    %   \cd{
    %     {X} \ar[r]^-{LLf} \ar[d]_{Lf} &
    %     {ELf} \ar[d]^{RLf} & &
    %     {Ef} \ar@{=}[r] \ar[d]_{LRf} &
    %     {Ef} \ar[d]^{Rf} \\
    %     {Ef} \ar@{=}[r] \ar@{-->}[ur]_-{\delta_f} &
    %     {Ef} & &
    %     {ERf} \ar[r]_-{RRf} \ar@{-->}[ur]_-{\mu_f} &
    %     {Ef}\rlap{ .}
    %   }
    % \end{equation}
%  \end{enumerate}
%   Moreover, in this circumstance, the $\L$ and $\R$ in (i) are
%   necessarily given by the 
% classes of maps admitting $(L, \vec \epsilon)$-coalgebra,
% respectively $(R, \vec \eta)$-algebra, structure.% retract-closures of the classes $\{Lf : f
  % \in \M^\2\}$
  % and $\{Rf : f \in \M^\2\}$. 
\end{Lemma}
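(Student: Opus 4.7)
The plan is to identify the structural condition with the lifting axiom for the only candidate weak factorization system. By the footnote to Definition~\ref{def:3}, the candidate that $(L,R)$ could possibly realise is uniquely forced: its left class $\L$ is the retract closure of $\{Lf\}$, and its right class $\R$ is the retract closure of $\{Rf\}$. Since both classes are then closed under retracts by construction, and $(L,R)$ itself supplies the factorizations, the only substantive axiom of Definition~\ref{def:wfs} left to verify is $\L \subseteq \llp{\R}$ (equivalently $\R \subseteq \rlp{\L}$). So both implications of the lemma reduce to statements about liftings.

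For the forward direction, I would observe that if $(L,R)$ realises the candidate WFS, then each $Lf$ lies in $\L$ and each $R(Lf)$ lies in $\R$, so $Lf$ admits a diagonal lift against $R(Lf)$. Under the identification made just before Lemma~\ref{lem:4} of $\cat{Clov}(\L)$ with $\ecoalg$, such a lift is by definition an $(L,\vec\epsilon)$-coalgebra structure on $Lf$. The argument producing an $(R,\vec\eta)$-algebra structure on each $Rf$ is entirely dual.

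For the backward direction, I would assume that every $Lf$ carries a coalgebra structure $\delta_f \colon Ef \to ELf$ and every $Rg$ carries an algebra structure $\mu_g \colon ERg \to Eg$, and set out to lift arbitrary squares $(h,k) \colon Lf \to Rg$ in $\M^{\2}$; retract-closure of lifting properties then promotes this to $\L \boxslash \R$. The natural candidate filler is the composite
\[
Ef \xrightarrow{\delta_f} ELf \xrightarrow{E(h,k)} ERg \xrightarrow{\mu_g} Eg\rlap{ .}
\]
Verifying the two triangle identities is then a routine calculation using the axioms $RLf \circ \delta_f = 1_{Ef}$ and $\mu_g \circ LRg = 1_{Eg}$ together with the commutativity of the squares $L(h,k) \colon LLf \to LRg$ and $R(h,k) \colon RLf \to RRg$ in $\M^{\2}$. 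There is no genuine obstacle here; the only point requiring attention is the notational bookkeeping needed to ensure that $E(h,k)$ is interpreted as the middle arrow of $L(h,k)$ (equivalently of $R(h,k)$), after which the triangle identities drop out immediately.
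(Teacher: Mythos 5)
Your proof is correct and is essentially the standard argument: reduce, via the footnote and the stability of lifting properties under retracts, to lifting each $Lf$ against each $Rg$, and take the filler $\mu_g \circ E(h,k) \circ \delta_f$, whose two triangle identities follow from the coalgebra equations for $\delta_f$, the algebra equations for $\mu_g$, and functoriality of $E$. The paper gives no proof of this lemma itself, citing Theorem~2.4 of \cite{Rosicky2002Lax-factorization} instead, and your argument coincides with the classical one found there.
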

% \begin{proof}
%   If $(L,R)$ realises $(\L, \R)$, then each $Lf$ has the left lifting
%   property against each $Rg$, so there exist fillers as in (ii). For
%   the converse, if $(L,R)$ realises a weak factorization system
%   $(\L, \R)$, then $\L$ and $\R$ must contain the retract-closures of
%   $\{Lf\}$ and $\{Rg\}$ respectively; so it suffices to show that, on
%   defining $\L$ and $\R$ to be these classes, each $\L$-map lifts
%   against each $\R$-map: which will follow so long as each $Lf$ lifts
%   against each $Rg$. But given a square as to the left below, we
%   obtain such a filler as the composite to the right.
%   \begin{equation*}
%     \cd{
%       {A} \ar[r]^-{h} \ar[d]_{Lf} &
%       {Eg} \ar[d]^{Rg} \\
%       {Ef} \ar[r]_-{k} &
%       {D}
%     } \qquad \qquad     Ef \xrightarrow{\Delta_f} ELf \xrightarrow{E(h,k)} ERg
%     \xrightarrow{\mu_g} Eg\rlap{ .} \quad \qedhere
%   \end{equation*}
% \end{proof}
Choosing such coalgebra and algebra structures amounts to choosing
sections $\vec \delta_f \colon Lf \rightarrow LLf$ for each
$\vec \epsilon_f$, and retractions
$\vec \mu_f \colon RRf \rightarrow Rf$ for each $\vec \eta_f$; or, in
more elementary terms, to choosing fillers
$\delta_f \colon Ef \rightarrow ELf$ and
$\mu_f \colon ERf \rightarrow Rf$ for all squares as in~\eqref{eq:4}.
If this is done carefully enough, we may obtain an instance of the
following structure.

% So a functorial weak factorization system can be encoded by the
% algebraic data of a functorial realization $(L,R)$ together with
% choices of $(L, \vec \epsilon)$-coalgebra and
% $(R, \vec \delta)$-algebra for each $Lf$ and $Rf$. An algebraic weak
% factorization system is given by exactly these data, subject to some
% additional axioms.

\begin{Defn}\label{def:2}
  An \emph{algebraic weak factorization system} $(\LL,\RR)$ on a
  category $\M$ comprises a comonad
  $\LL = \smash{(L,\vec{\epsilon},\vec{\delta})}$ and a monad
  $\RR = \smash{(R,\vec{\eta}, \vec{\mu})}$ on $\M^\2$ such that
  $L, R, \vec \epsilon$ and $\vec \eta$ arise from a functorial
  factorization~\eqref{eq:7} in the manner of~\eqref{eq:9}, and such
  that the canonical map $(\delta,\mu) \colon LR \Rightarrow RL$ is a distributive law.
\end{Defn}

By Lemma~\ref{lem:1}, any algebraic weak factorization system
$(\LL, \RR)$ has an underlying weak factorization $(\L, \R)$ whose
classes are the maps admitting $(L, \vec \epsilon)$-coalgebra or
$(R, \vec \eta)$-algebra structure. However, equally important in this
context are the \emph{$\LL$-maps} and \emph{$\RR$-maps}: the
coalgebras for the comonad $\LL$ and the algebras for the monad $\RR$.
The \emph{data} for $\LL$- or $\RR$-map structure is the same as that
for $(L, \vec \epsilon)$-coalgebra or $(R, \vec \eta)$-algebra
structure---a choice of filler as to the left or right
in~\eqref{eq:14}---but an additional (co)associativity axiom is
required; so not every $\L$- or $\R$-map need admit $\LL$- or
$\RR$-map structure. The general situation is that:
\begin{Lemma}
  \label{lem:2}
  If $(\LL, \RR)$ is an algebraic weak factorization system, then its
  underlying weak factorization system has classes
  $\L = \mathrm{Retr}(\exists \LL)$ and
  $\R = \mathrm{Retr}(\exists \RR)$, where we write
  $\mathrm{Retr}(\thg)$ for the operation of retract-closure, and
  write
  \begin{align*}
    \exists \LL &= \{f \in \M^\2 : \text{$f$ admits $\LL$-map
      structure}\} \\ \text{and} \qquad
   \exists \RR &= \{g \in \M^\2 : \text{$g$ admits $\RR$-map structure}\}.
  \end{align*}
  % admitting $\LL$-map,
  % respectively $\RR$-map, structure.
\end{Lemma}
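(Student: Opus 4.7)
The plan is to establish the equality $\L = \mathrm{Retr}(\exists\LL)$ by proving the two inclusions separately, and then to argue the dual statement for $\R$ by symmetry. Throughout, I use the footnote to Definition~\ref{def:3} (alluded to in the statement of Lemma~\ref{lem:1}), which says that the weak factorization system realized by a functorial factorization $(L,R)$ has left class $\L = \mathrm{Retr}(\{Lf : f \in \M^\2\})$ and right class $\R = \mathrm{Retr}(\{Rf : f \in \M^\2\})$.

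For the inclusion $\mathrm{Retr}(\exists\LL) \subseteq \L$, it suffices, since $\L$ is retract-closed, to show $\exists\LL \subseteq \L$. So let $(f,s)$ be an $\LL$-coalgebra; then $s \colon f \to Lf$ is a morphism in $\M^\2$ satisfying $\vec\epsilon_f \circ s = 1_f$, and hence $f$ is a retract of $Lf$ in $\M^\2$. But $Lf \in \L$ by the description of $\L$ just recalled, so $f \in \L$.

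For the reverse inclusion $\L \subseteq \mathrm{Retr}(\exists\LL)$, by the same description of $\L$ it suffices to observe that each $Lf$ lies in $\exists\LL$. But this is immediate from the comonad structure: $\vec\delta_f \colon Lf \to LLf$ equips $Lf$ with the structure of a (free) $\LL$-coalgebra, by the counit and coassociativity axioms of the comonad $\LL$. Consequently $\{Lf : f \in \M^\2\} \subseteq \exists\LL$, and taking retract-closure gives $\L = \mathrm{Retr}(\{Lf\}) \subseteq \mathrm{Retr}(\exists\LL)$.

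The equality $\R = \mathrm{Retr}(\exists\RR)$ now follows by the formally dual argument, with the counit $\vec\eta_g$ and multiplication $\vec\mu_g$ of the monad $\RR$ playing the roles of $\vec\epsilon_f$ and $\vec\delta_f$. There is no real obstacle here: the whole content of the lemma is the straightforward remark that $\LL$-coalgebra structure requires coassociativity whereas mere $(L,\vec\epsilon)$-coalgebra structure does not, and that this extra condition is only lost up to retracts when one passes to the underlying weak factorization system.
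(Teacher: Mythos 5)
Your proof is correct and follows essentially the same route as the paper's. The paper's version proves $\mathrm{Retr}(\exists\LL)\subseteq\L$ by noting an $\LL$-coalgebra is a fortiori an $(L,\vec\epsilon)$-coalgebra and then citing Lemma~\ref{lem:4} (cloven $\L$-maps are precisely $\L$-maps), whereas you unwind this by observing directly that the counit axiom exhibits $f$ as a retract of $Lf$; these are the same argument at different levels of unfolding. For the reverse inclusion both you and the paper exploit the free $\LL$-coalgebra $(Lf,\vec\delta_f)$ and the retract characterisation of $\L$ (you via the footnote to Definition~\ref{def:3}, the paper via Lemma~\ref{lem:4}), and both dispatch the right-hand case by duality.
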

\begin{proof}
  Each $\LL$-map is a fortiori an $(L, \vec\epsilon)$-coalgebra and so
  has underlying map in $\L$; whence
  $\mathrm{Retr}(\exists \LL) \subseteq \mathrm{Retr}(\L) = \L$.
  Conversely, each $\L$-map admits by Lemma~\ref{lem:4} a coalgebra
  structure exhibiting it as a retract of $Lf$; as $Lf$ underlies the
  $\LL$-map $(Lf, \smash{\vec \delta_f})$, we thus have
  $\L \subseteq \mathrm{Retr}(\exists \LL)$. The right case is 
  dual.
\end{proof}

\begin{Rk}
  \label{rk:3}
  It was shown in~\cite{Garner2009Understanding} that, in the locally
  presentable setting, each weak factorization system $(\L, \R)$
  generated by a set of maps $J$ has an algebraic realization
  $(\LL, \RR)$, in which the $\RR$-maps are morphisms
  $f \colon X \rightarrow Y$ equipped with chosen lifts against each
  map in $J$. In this case, we have $\exists \RR = \R$, but typically
  $\exists \mathbb{L} \subsetneq \L$; in fact, $\exists \mathbb{L}$
  often comprises precisely the \emph{$J$-cell complexes} of which the
  $\L$-maps are retracts (cf.~\cite{Athorne2013Coalgebraic}). On the
  other hand,~\cite[Proposition~17]{bourke-garner-awfsI} gives an
  example of an algebraic weak factorization system on $\cat{CAT}$ for
  which $\exists \RR \subsetneq \R$.
\end{Rk}

We say that an algebraic weak factorization system $(\LL, \RR)$ is
\emph{accessible} if $\M$ is locally presentable and the functor
$E \colon \M^\2 \rightarrow \M$ underlying the functorial
factorization is accessible. In this circumstance, the underlying weak
factorization system is clearly accessible. In the other direction, we
have the following result; for the proof, see \sec 3.1 of~\cite{HKRS},
in particular Remark~3.1.8.
\begin{Prop}
  \label{prop:1}
  Every accessible weak factorization system is the underlying weak
  factorization system of an accessible algebraic weak factorization
  system.
\end{Prop}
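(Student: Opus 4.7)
The plan is to realise $(\L, \R)$ as the underlying wfs of an AWFS produced by Garner's algebraic small object argument~\cite{Garner2009Understanding}, in its extension to accessible generating data. The key input is Lemma~\ref{lem:4}: the forgetful functor $\ul \colon \cat{Clov}(\L) \rightarrow \M^\2$ is an accessible left adjoint between locally presentable categories whose image is precisely $\L$.

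I would apply the accessible-generator version of the algebraic small object argument to $\ul$ to obtain an AWFS $(\LL, \RR)$ on $\M$. The functorial factorisation of $(\LL, \RR)$ is built by a transfinite iteration starting from $\ul$ and using the colimits of $\M^\2$; since $\M$ is locally presentable and $\ul$ is accessible, this iteration converges at some regular cardinal, and the resulting $E \colon \M^\2 \to \M$ is accessible. By the universal property of the construction, $\RR$-algebra structure on $g$ amounts to a coherent system of fillers for squares $\ul(f,s) \to g$, while each new left factor $Lf$ carries a canonical $\LL$-coalgebra structure; the distributive law required by Definition~\ref{def:2} is built into the output.

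The remaining task is to verify that the underlying wfs of $(\LL, \RR)$ equals $(\L, \R)$, which by Lemma~\ref{lem:2} reduces to showing $\mathrm{Retr}(\exists \LL) = \L$ and $\mathrm{Retr}(\exists \RR) = \R$. For the right class: since the image of $\ul$ is $\L$, any $\RR$-algebra structure supplies lifts against every $\L$-map, so $\exists \RR \subseteq \R$; conversely, any $g \in \R$ admits an $\RR$-algebra structure by freeness of the construction (the required liftings exist because $g \in \rlp{\L}$), so $\exists \RR = \R$. For the left class: each new $Lf$ is an $\LL$-map, and any $\L$-map $\ell$ is a retract of $L\ell$ by the standard retract argument applied to the new factorisation of $\ell$, so $\mathrm{Retr}(\exists \LL) = \L$.

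The main obstacle is the technical claim that Garner's algebraic small object argument extends to accessible generators (rather than small indexing data) with preservation of accessibility throughout the transfinite iteration. This relies on the closure of the 2-category $\cat{ACC}$ of accessible categories under bilimits and under the (co)free (co)monad constructions on accessible (co)pointed endofunctors, precisely the input already exploited in the proof of Lemma~\ref{lem:4}, and is the essence of Remark~3.1.8 of~\cite{HKRS}.
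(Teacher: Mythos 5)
Your proposal follows essentially the same route as the paper, which offers no in-house proof of Proposition~\ref{prop:1} but instead cites \sec 3.1 and Remark~3.1.8 of~\cite{HKRS} --- precisely the accessible-generator form of Garner's algebraic small object argument that you invoke, and for exactly the reason you state (closure of $\cat{ACC}$ under bilimits and the relevant (co)free (co)monad constructions on accessible (co)pointed endofunctors). The one step worth tightening is the ``conversely, $\R \subseteq \exists\RR$ by freeness'' clause: mere existence of lifts is not enough, since $\RR$-algebra structure requires a \emph{coherent} family of lifts against all of $\cat{Clov}(\L)$; the bridge is that any $g \in \R$ carries a cloven $\R$-map structure $(g,p)$ because $(L,E,R)$ realizes $(\L,\R)$, and the formula $\varphi_{(f,s)}(h,k) = p \cdot E(h,k) \cdot s$ then yields the required natural lifting operation (compare the argument underlying Proposition~\ref{prop:7}), which is exactly $\RR$-algebra structure.
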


In light of this, we can equally define an accessible model structure
on a locally presentable category $\M$ to be one whose underlying weak
factorization systems admit accessible algebraic realizations. This
is the choice made in~\cite[Definition~3.1.6]{HKRS}, in order to
exploit known results on lifting accessible algebraic weak
factorization systems; it is to these that we now turn.

\subsection{Lifting algebraic weak factorization systems}
\label{sec:lift-algebr-weak-1}
To explain left- and right-lifting of algebraic weak
factorization systems, we first need to recall the manner in which
$\LL$- and $\RR$-maps \emph{compose}. This is governed by certain
functors into the categories $\coalg$ and $\alg$ of
$\LL$- and $\RR$-maps, as in the dotted parts of:
\begin{equation*}
  \cd[@C=4pc@R=2pc]{
    \coalg \times_{\M} \coalg \ar@{-->}[r]^-{\circ}
    \ar[d]_-{\mathrm{U}_{\LL} \times_\M \mathrm{U}_{\LL}}
    & \coalg \ar[d]^{\mathrm{U}_{\LL}} \ar@<-1ex>[r]_{t\mathrm{U}_\LL}
    \ar@<1ex>[r]^{s\smash{\mathrm{U}_\LL}} & \M \ar@{-->}[l]|{i} \ar[d]^-{\id} &
    \CCoalg \ar[d]^-{\UU_\LL}\\
    \M^\3 \ar[r]^-\circ & \M^\2 \ar@<-1ex>[r]_{t} \ar@<1ex>[r]^{s} & \M
    \ar[l]|{\,i\,} &
    \Sq\M \\
    \alg \times_{\M} \alg \ar@{-->}[r]^-{\circ}
    \ar[u]^-{\mathrm{U}_{\RR} \times_\M \mathrm{U}_{\RR}}
    & \alg \ar[u]_{\mathrm{U}_{\RR}} \ar@<-1ex>[r]_{t\smash{\mathrm{U}_\RR}}
    \ar@<1ex>[r]^{s\mathrm{U}_\RR}& \M \ar@{-->}[l]|{i} \ar[u]_-{\id} & \AAlg
    \ar[u]_-{\UU_\RR}\rlap{ .}
    % \ar@<1ex>[d]^t \ar@{<-}[d]|i \ar[r]^U & \M^\2 \ar@<-1ex>[d]_s
    % \ar@<1ex>[d]^t \ar@{<-}[d]|i & \alg \ar@<-1ex>[d]_s \ar@<1ex>[d]^t
    % \ar@{<-}[d]|i \ar[r]^U & \M^\2 \ar@<-1ex>[d]_s \ar@<1ex>[d]^t
    % \ar@{<-}[d]|i \\ \M \ar[r]_1 & \M &\M \ar[r]_1 & \M
  }
\end{equation*}
These functors exhibit the top and bottom rows as \emph{double
  categories}---i.e., internal categories in $\cat{CAT}$---over the
double category $\Sq{\M}$ of objects, morphisms, morphisms and
commutating squares in $\M$. We display this to the right above. In
more detail, objects and horizontal morphisms of these double
categories $\CCoalg$ and $\AAlg$ are just objects and arrows of $\M$;
vertical arrows are $\LL$-coalgebras (respectively, $\RR$-algebras);
while squares are commutative squares---maps in $\M^\2$---that lift to
maps of $\LL$-coalgebras (respectively, $\RR$-algebras).

This is relevant due to a powerful and slightly surprising result: an
algebraic weak factorization system $(\LL, \RR)$ is completely
determined by either of the double categories
$\UU_\LL \colon \CCoalg \to \Sq{\M}$ or
$\UU_\RR \colon \AAlg \to \Sq{\M}$ over $\Sq\M$;
see~\cite[Theorem~2.24]{riehl-algebraic-model-structures}. This result
was strengthened in~\cite{bourke-garner-awfsI} to give a complete
characterization of when a double category over $\Sq{\M}$ is
isomorphic to the double category of left or right maps for an
algebraic weak factorization system.

\begin{Thm}[{\cite[Theorem~6]{bourke-garner-awfsI}}]\label{thm:bourke}
  A double category $\UU \colon \AA \to \Sq\M$ over $\Sq\M$ is
  isomorphic to the double category of left (resp., right) maps
  for an algebraic weak factorization system on $\M$ if and only if:
\begin{enumerate}[(i),itemsep=0em]
\item The object-level functor $U \colon \A_0 \rightarrow \M$ is an
  isomorphism, and the arrow-level functor $U \colon \A_1
  \rightarrow \M^\2$ is strictly comonadic (resp., monadic); and
\item for every $f \in \A_1$, the square left below (resp., right below) in $\Sq\M$ is in
  the image of $\UU$:
\[ \xymatrix{ a \ar[r]^1 \ar[d]_1 & a \ar[d]^{Uf} \\ a \ar[r]_{Uf} &
    b} \qquad \qquad
\xymatrix{ a \ar[r]^{Uf} \ar[d]_{Uf} & b \ar[d]^1 \\ b \ar[r]_1 & b\rlap{ .}}\]
\end{enumerate}\end{Thm}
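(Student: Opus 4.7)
The plan is to prove necessity and sufficiency separately; I treat the ``left'' (coalgebra) case, the right being formally dual.

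For necessity, I verify that the double category $\UU_\LL \colon \CCoalg \to \Sq\M$ associated to an AWFS $(\LL, \RR)$ satisfies (i) and (ii). The object-level claim in (i) is immediate since $(\CCoalg)_0 = \M$ by construction, and the arrow-level claim is essentially the definition of $\coalg$ as the category of coalgebras for the comonad $\LL$ on $\M^\2$. For (ii), I use that $1_a$ carries its canonical $\LL$-coalgebra structure (because $L$ is source-preserving, so that $L 1_a = 1_a$), and that the displayed square is the image of the canonical $\LL$-coalgebra morphism from $1_a$ to $Lf$ whose codomain-component is $Lf$.

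For sufficiency, starting from $\UU \colon \AA \to \Sq\M$ satisfying (i) and (ii), I construct an AWFS $(\LL, \RR)$ whose double category of coalgebras is isomorphic to $\AA$. Strict comonadicity of $U \colon \A_1 \to \M^\2$ yields a comonad $\LL = (L, \vec\epsilon, \vec\delta)$ on $\M^\2$ together with a strict identification $\A_1 \cong \coalg$ over $\M^\2$. The double-category structure of $\AA$ lying over $\Sq\M$ forces horizontal identities in $\AA$ to cover horizontal identities in $\Sq\M$, which constrains $L$ to be source-preserving and $\vec\epsilon$ to have identity domain-component: in other words, $\LL$ arises from a functorial factorization in the sense of Definition~\ref{def:2}, giving a canonical candidate for the underlying functor $R$ and for $\vec\eta$.

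The core work is then the construction of the monad $\RR$ and the verification of the AWFS axioms. For each $f \in \M^\2$, condition (ii) supplies a square in $\AA$ whose image in $\Sq\M$ is the displayed square; by strict comonadicity, this square corresponds to a coalgebra morphism, and its codomain-component gives the unit $\vec\eta_f \colon f \to Rf$. Iterating the same observation with $Rf$ in place of $f$ and invoking the coassociativity of $\vec\delta$ yields the multiplication $\vec\mu_f \colon RRf \to Rf$, with the monad axioms following by uniqueness of lifts, itself a consequence of strict comonadicity. The main obstacle is then the verification that $(\delta, \mu) \colon LR \Rightarrow RL$ is a distributive law: this requires a careful comparison between the horizontal composition structure on $\AA$ (which, by the double-category axioms, encodes composition of $\LL$-coalgebras) and the liftings produced from (ii), and is the only genuinely delicate equational check. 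Once done, a direct comparison shows that the double category of coalgebras for the constructed AWFS is isomorphic to $\AA$ over $\Sq\M$: they agree on objects and vertical arrows by the identification $\A_1 \cong \coalg$, and on squares by the compatibility of horizontal composition forced by (ii).
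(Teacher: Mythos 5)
Note first that the paper offers no proof of this statement: it is imported verbatim from Theorem~6 of the cited Bourke--Garner paper, so your outline can only be measured against the argument given there and against what any complete proof must contain. Your necessity direction is right in outline but wrong in its details: domain-preservation of $L$ gives $\mathrm{dom}(L1_a)=a$, not $L1_a=1_a$ (in general $E1_a\neq a$); the canonical coalgebra structure on $1_a$ is the map $L1_a\colon a\to E1_a$, whose coassociativity still needs a (short) check; and the cell witnessing (ii) is the coalgebra morphism $(1_a,Uf)\colon 1_a\to f$ into the coalgebra $f$ itself, verified by $s\circ Uf=L(Uf)=E(1_a,Uf)\circ L1_a$, not a morphism ``to $Lf$''.

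The genuine gap is in sufficiency, which is the entire content of the theorem, and there two of your steps rest on incorrect mechanisms while the central verifications are deferred rather than done. First, strict comonadicity hands you a comonad on $\M^\2$ with no reason to be of functorial-factorization shape; the assertion that the double-category structure ``forces'' $L$ to be domain-preserving with $\vec\epsilon$ of identity domain component is unsubstantiated---the vertical identities of $\AA$ only give coalgebra structures on identity maps---and this normalization (possibly only up to isomorphism over $\Sq\M$) is precisely where hypothesis (ii) has to do work. Second, hypothesis (ii) quantifies over $f\in\A_1$, i.e.\ over coalgebras, so it cannot be ``applied to each $f\in\M^\2$'', and certainly not ``with $Rf$ in place of $f$'', since $Rf$ carries no coalgebra structure in general; hence your proposed construction of $\vec\eta$ and especially of $\vec\mu$ does not get off the ground. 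The unit is in fact the automatic part ($\vec\eta_f=(Lf,1)$ once the factorization shape is in place), whereas the multiplication of $\RR$ is not determined by $\LL$ together with (ii) at all: it is exactly the additional datum encoded by the vertical composition law of $\AA$ (one composes the cofree coalgebras on $f$ and on $Rf$ and extracts $\mu_f\colon ERf\to Ef$ from the resulting structure, as in the cited source and as echoed by the role of composition in Proposition~\ref{prop:7} of this paper), after which the monad axioms and the distributive law must be checked against that composition law---the step you label ``the only genuinely delicate equational check'' and then assume. What your sketch omits is therefore the heart of the proof, and the mechanism it proposes in its place would not produce it.
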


This result allows for a straightforward definition and a
straightforward construction of left- and right-liftings for 
algebraic weak factorization systems.

\begin{Defn}
  \label{def:4}
  Given an algebraic weak factorization system $(\LL, \RR)$ on $\M$,
  its \emph{left-lifting} along a left adjoint $V$ or its
  \emph{right-lifting} along a right adjoint $U$ as in~\eqref{eq:10}
  are, when they exist, the algebraic weak factorization systems
  $\smash{(\ll \LL, \ll \RR)}$ on $\K$ and
  $\smash{(\rl \LL, \rl \RR)}$ on $\C$ characterised by the following
  pullbacks of double categories:
\begin{equation}\label{eq:alg-lifting}
  \cd[@C+0.2em]{
    {\CCoalg[\ll \LL]} \ar[r]^-{} \ar[d]_{\UU_{\ll \LL}}
    \save*!/dr-1.7pc/dr:(-1,1)@^{|-}\restore &
    {\CCoalg} \ar[d]^{\UU_\LL} \\
    {\Sq\K} \ar[r]_-{\Sq V} &
    {\Sq\M}
  } \qquad \text{and} \qquad
    \cd[@C+0.2em]{
    {\AAlg[\rl \RR]} \ar[r]^-{} \ar[d]_{\UU_{\rl \RR}}
    \save*!/dr-1.5pc/dr:(-1,1)@^{|-}\restore &
    {\AAlg} \ar[d]^{\UU_\RR} \\
    {\Sq\C} \ar[r]_-{\Sq U} &
    {\Sq\M}\rlap{ .}
  }
\end{equation}
\end{Defn}

\begin{Prop}[{\cite[Proposition~13]{bourke-garner-awfsI}}]
  \label{prop:5}
  Let $(\LL, \RR)$ be an accessible algebraic weak factorization system on $\M$,
  and suppose there are given adjunctions~\eqref{eq:10}.
  % \begin{equation*}
  %   \xymatrix@C=4pc@R=4pc{ \K \ar@<1ex>[r]^V \ar@{}[r]|\perp & \M
  %     \ar@<1ex>[l]^R \ar@<1ex>[r]^L\ar@{}[r]|\perp & \C
  %     \ar@<1ex>[l]^U}
  % \end{equation*}
  If $\C$ and $\K$ are also locally presentable, then $(\LL, \RR)$
  admits both an accessible left-lifting $(\ll \LL, \ll \RR)$ along $V$ and
  an accessible right-lifting $(\rl \LL, \rl \RR)$ along $U$, in the
  sense of Definition~\ref{def:4}.
\end{Prop}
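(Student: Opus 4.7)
The plan is to apply Theorem~\ref{thm:bourke} to the pullback double categories defined in~\eqref{eq:alg-lifting}, thereby realising them as the double categories of coalgebras and algebras for algebraic weak factorization systems on $\K$ and $\C$. Accessibility will follow from the description of the relevant pullbacks as bilimits in the $2$-category of locally presentable categories. The argument is formally dual between the two cases, so I concentrate on the left-lifting.

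First I would verify condition (i) of Theorem~\ref{thm:bourke}. Since pullbacks of double categories in $\CAT$ are computed levelwise, the object-level functor of $\UU_{\ll\LL}$ is the pullback of that of $\UU_\LL$ along $V$; because $(\LL, \RR)$ is already an algebraic weak factorization system, the latter is the identity on $\M$, so the former is an isomorphism, as required. For the arrow-level, the functor in question is the pullback of the strictly comonadic $\UU_\LL$ along $V^\2 \colon \K^\2 \to \M^\2$. Both are left adjoints between locally presentable categories --- the first by comonadicity, the second because $V$ is a left adjoint --- so Bird's theorem on the closure of locally presentable categories and left adjoints under bilimits (as used in Lemma~\ref{lem:new4}) ensures that the pullback is locally presentable and the projection is a left adjoint.

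The main obstacle is to upgrade this to strict comonadicity. The natural approach is to lift the comonad $L$ on $\M^\2$ to a comonad $\ll L$ on $\K^\2$ using the adjunction $V^\2 \dashv W^\2$, where $W$ is the right adjoint to $V$, and to identify the pullback with the Eilenberg--Moore category of $\ll L$-coalgebras, so that the pullback projection is literally a forgetful functor. Equivalently, one can verify the dual form of Beck's theorem directly: the projection reflects isomorphisms because $\UU_\LL$ does, and it creates the relevant equalizers because $\UU_\LL$ does (by comonadicity) while $V^\2$ preserves them (being a left adjoint).

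Condition (ii) of Theorem~\ref{thm:bourke} is then a routine transfer of structure across the pullback: given an arrow of $\CCoalg[\ll\LL]$, which corresponds to a map $g$ of $\K$ together with an $\LL$-coalgebra structure on $V^\2 g$, the required square in $\Sq\K$ maps under $\Sq V$ to the analogous square in $\Sq\M$, which lies in the image of $\UU_\LL$ by condition (ii) applied to $(\LL, \RR)$; the pullback universal property then produces the required preimage. Accessibility of the lifted algebraic weak factorization system is automatic from the locally presentable description of the pullback, because the underlying functor of the lifted functorial factorization factors through it. The right-lifting case follows dually, swapping comonads for monads, left adjoints for right adjoints, and invoking the dual form of Bird's theorem.
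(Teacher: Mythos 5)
Your proposal follows essentially the same route as the paper, which likewise obtains $(\ll\LL, \ll\RR)$ and $(\rl\LL, \rl\RR)$ by applying Theorem~\ref{thm:bourke} to the pullbacks~\eqref{eq:alg-lifting} and observes that the only non-trivial hypothesis is the existence of the adjoint to the arrow-level projection, supplied by local presentability exactly as in Lemmas~\ref{lem:4} and~\ref{lem:new4}; your checks of conditions (i) and (ii) and of strict (co)monadicity via Beck's theorem are sound. One justification should be repaired: the equalizers relevant to comonadicity are split, hence absolute, so $V^\2$ preserves them for that reason and not because it is a left adjoint (left adjoints need not preserve equalizers); relatedly, the comonad $L$ does not simply ``lift'' along the left adjoint $V^\2$ to give the induced comonad in closed form, but your direct Beck-style verification makes that detour unnecessary.
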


The proof is an application of Theorem~\ref{thm:bourke}: in the
left-lifted case, say, we first pull back $\UU_\LL$ along $\Sq V$ to
obtain a double functor $\UU \colon \mathbb{A} \rightarrow \Sq \K$,
and obtain the desired $(\ll\LL, \ll\RR)$ from this by showing that
$\UU$ satisfies the hypotheses of Theorem~\ref{thm:bourke}. The
only hypothesis which is non-trivial to verify is that
$U_1 \colon \A_1 \rightarrow \K^\2$ is a left adjoint, and for this, we
exploit local presentability and argue exactly as in the proof of
Lemma~\ref{lem:4}.

\subsection{The flaw in the previous proof}
\label{sec:flaw-previous-proof}
We are now in a position to explain the error made in~\cite{HKRS} in
proving Theorem~\ref{thm:2}. The authors state
Proposition~\ref{prop:5} above as Theorems~3.3.1 (for the left case)
and Theorem~3.3.2 (for the right), but add clauses which amount to the
following:
\begin{Cl}\label{claim} In the situation of Definition~\ref{def:4}, if the stated
  left- and right-liftings $\smash{(\ll \LL, \ll \RR)}$ and $\smash{(\rl \LL, \rl
  \RR)}$ of $(\LL, \RR)$ exist, then:
  \begin{enumerate}[(i)]
  \item The underlying weak factorization system of
    $(\smash{\ll \LL, \ll \RR})$ is the left-lifting of the underlying weak
    factorization system of $(\LL, \RR)$; and
  \item The underlying weak factorization system of
    $(\smash{\rl \LL, \rl \RR})$ is the right-lifting of the underlying weak
    factorization system of $(\LL, \RR)$.
  \end{enumerate}
\end{Cl}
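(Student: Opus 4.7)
The plan is to prove (i) directly, deducing (ii) by a symmetric argument. By Definition~\ref{def:4}, the pullback square on the left of~\eqref{eq:alg-lifting} identifies $\ll\LL$-map structure on $f \in \K^\2$ with $\LL$-map structure on $Vf$, so $\exists\ll\LL = V^{-1}(\exists\LL)$. Applying Lemma~\ref{lem:2} to both $(\ll\LL, \ll\RR)$ and $(\LL, \RR)$, the left class of the underlying weak factorization system of $(\ll\LL, \ll\RR)$ is $\mathrm{Retr}(V^{-1}(\exists\LL))$, while the left class of the left-lifting of the underlying weak factorization system of $(\LL, \RR)$ is $V^{-1}(\mathrm{Retr}(\exists\LL))$. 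Since each weak factorization system is determined by its left class via the lifting operator, (i) reduces to the equality
\[
\mathrm{Retr}\bigl(V^{-1}(\exists\LL)\bigr) = V^{-1}\bigl(\mathrm{Retr}(\exists\LL)\bigr).
\]

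The inclusion from left to right is immediate: any functor preserves retracts, so if $f$ is a retract in $\K^\2$ of some $g \in V^{-1}(\exists\LL)$, then $Vf$ is a retract of $Vg \in \exists\LL$, whence $Vf \in \mathrm{Retr}(\exists\LL)$. The main obstacle is the reverse inclusion. To establish it, one is given $f \in \K^\2$ with $Vf$ a retract in $\M^\2$ of some $\LL$-map $h$, and must produce a map $g \in \K^\2$ such that $Vg$ admits $\LL$-map structure and $f$ is a retract of $g$ in $\K^\2$. The natural attempt would be to transpose the ambient retract data across $L \dashv V$ (the left adjoint arising by local presentability) and then equip the transposed $g$ with $\LL$-map structure inherited from $h$; but the transposed diagram lives in $\K^\2$ without any guarantee that its $V$-image reproduces the original retract diagram in $\M^\2$, so neither the retraction identity onto $f$ nor the identification $Vg = h$ is available.

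One would therefore be driven to a secondary plan: argue that $\exists\LL$ is already retract-closed, so that both sides of the displayed equality collapse to $V^{-1}(\exists\LL)$. However, Remark~\ref{rk:3} flags exactly this point as where the subtlety lies: for a typical cofibrantly generated $(\LL, \RR)$, the class $\exists\LL$ consists precisely of the $J$-cell complexes and is strictly smaller than $\L = \mathrm{Retr}(\exists\LL)$. I therefore anticipate that the claim as stated cannot be proved without further hypotheses on $(\LL, \RR)$ forcing retract-closedness of $\exists\LL$, or else by first replacing $(\LL, \RR)$ with a suitably modified algebraic realization of the same underlying weak factorization system---one whose class of $\LL$-maps is already retract-closed---before performing the lifting.
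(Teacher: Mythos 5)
Your instinct here is exactly right, and it is worth saying plainly: this Claim is \emph{false}, and the paper states it only in order to refute it (Proposition~\ref{prop:3}). Your analysis tracks the paper's almost line for line: using the pullback in~\eqref{eq:alg-lifting} to get $\exists\ll\LL = V^{-1}(\exists\LL)$, and Lemma~\ref{lem:2} to identify the two left classes, you reduce (i) to the equality~\eqref{eq:12}, observe that only the inclusion $\mathrm{Retr}(V^{-1}(\exists\LL)) \subseteq V^{-1}(\mathrm{Retr}(\exists\LL))$ is automatic, and correctly diagnose that the reverse inclusion is where any proof must break down, for exactly the reason flagged in Remark~\ref{rk:3} (cell complexes versus their retracts). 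Your proposed remedies are also the paper's: adding the hypothesis that $\exists\LL$ be retract-closed recovers the claim (Proposition~\ref{prop:8}), and replacing $(\LL,\RR)$ by a realization whose $\LL$-maps are already retract-closed before lifting is precisely the shifted system $(\LL^\sharp,\RR^\sharp)$ of Proposition~\ref{prop:shifted-awfs}.

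The one thing you stop short of is converting your (correct) suspicion into a refutation: to settle the matter one needs an explicit example where the inclusion in~\eqref{eq:12} is strict. The paper gives two. In Example~\ref{counterexample:1}, the weak factorization system on $\Mod_{\ZZ/6}$ generated by $0 \to \ZZ/6$ is algebraized via the free-module comonad, so that $0 \to M$ admits $\LL$-map structure only when $M$ is free; taking $V = \ZZ/2 \otimes_{\ZZ/6}(\thg)$, the map $0 \to \ZZ/6$ lies in $V^{-1}(\mathrm{Retr}(\exists\LL))$ (since $\ZZ/2$ is projective) but not in $\mathrm{Retr}(V^{-1}(\exists\LL))$ (since no nonzero $2$-torsion module is free, and the relevant class is already retract-closed). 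Example~\ref{counterexample:2} does the same with $M$-sets. So your verdict---that the claim is unprovable as stated and needs retract-closedness or a shifted realization---is the correct endpoint; a complete treatment would cap it with such a counterexample rather than leaving it as an anticipation.
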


This claim would legitimize the following means of constructing the left-
or right-liftings of an accessible weak factorization system as in
Theorem~\ref{thm:2}. One first chooses an accessible algebraic
realization; then lifts that; and then takes the underlying weak
factorization system. The problem with this is that:
\begin{Prop}
  \label{prop:3}
  Claim~\ref{claim} is false.
\end{Prop}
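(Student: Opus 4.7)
The plan is to disprove Claim~\ref{claim} by producing explicit counterexamples; it suffices to treat part~(i), since part~(ii) will follow by a formally dual construction. The first step is to unpack what the two weak factorization systems actually are. By the pullback square in~\eqref{eq:alg-lifting}, an $\ll\LL$-coalgebra structure on a morphism $f$ of $\K$ is precisely an $\LL$-coalgebra structure on $Vf$, so $\exists\, \ll\LL = V^{-1}(\exists \LL)$. Lemma~\ref{lem:2} therefore identifies the left class of the underlying weak factorization system of $(\ll\LL, \ll\RR)$ as $\mathrm{Retr}(V^{-1}(\exists \LL))$, whereas the left class of the left-lifting of $(\L, \R)$ along $V$ is
\[
V^{-1}\L \;=\; V^{-1}\bigl(\mathrm{Retr}(\exists \LL)\bigr).
\]
The inclusion $\mathrm{Retr}(V^{-1}(\exists \LL)) \subseteq V^{-1}(\mathrm{Retr}(\exists \LL))$ is automatic, and Claim~\ref{claim}(i) amounts to the assertion that this inclusion is always an equality. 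The task is to break the commutation of $V^{-1}$ with retract-closure.

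Next I would seek an algebraic weak factorization system $(\LL, \RR)$ with $\exists \LL \subsetneq \L$, together with a locally presentable $\K$ and a left adjoint $V \colon \K \to \M$ such that some map $f$ of $\K$ has $Vf \in \L \setminus \exists \LL$ but no retract diagram in $\K^\2$ realizes $f$ as a retract of a $V$-preimage of any $\LL$-map. Remark~\ref{rk:3} furnishes a rich supply of strict inclusions $\exists \LL \subsetneq \L$: an AWFS produced by the algebraic small object argument from a generating set $J$ typically has $\exists \LL$ consisting of honest $J$-cell complexes while $\L$ only becomes all their retracts. Pairing such an $(\LL, \RR)$ with a fully faithful $V \colon \K \to \M$ whose essential image contains $f$ but omits an object mediating the retract witnessing $Vf \in \L$ then does the job: the retract diagram used in $\M^\2$ simply fails to descend to $\K^\2$, and so $f$ lies in $V^{-1}\L$ but not in $\mathrm{Retr}(V^{-1}(\exists \LL))$.

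The dual construction for part~(ii) exploits the algebraic weak factorization system on $\cat{CAT}$ mentioned at the end of Remark~\ref{rk:3}, for which $\exists \RR \subsetneq \R$; an analogous right adjoint $U \colon \C \to \M$ yields the failure $\mathrm{Retr}(U^{-1}(\exists \RR)) \subsetneq U^{-1}(\mathrm{Retr}(\exists \RR))$. The main obstacle throughout is calibrating the adjunction: it must be nontrivial enough to separate the two retract-closures, yet tame enough that one can exhibit by hand an explicit retract-witness in $\M^\2$ (respectively in $\M^\2$) and verify directly that no lift to $\K^\2$ (respectively $\C^\2$) exists. I expect that taking $V$ (or $U$) to be fully faithful onto a carefully chosen reflective or coreflective subcategory will be the cleanest route, reducing the verification to a short inspection of the relevant retract diagrams.
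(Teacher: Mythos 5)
Your reduction is exactly the paper's: identifying $\exists\,\ll\LL = V^{-1}(\exists\LL)$ from the pullback \eqref{eq:alg-lifting}, invoking Lemma~\ref{lem:2}, and restating Claim~\ref{claim}(i) as the equality $\mathrm{Retr}(V^{-1}(\exists \LL)) = V^{-1}(\mathrm{Retr}(\exists \LL))$ with one inclusion automatic. But from that point on you have a plan, not a proof. The proposition asserts that a counterexample \emph{exists}, and you never produce one: no specific $\M$, $(\LL,\RR)$, $\K$, $V$ or map is exhibited, and the decisive verification is deferred (``I would seek\dots'', ``I expect\dots''). Moreover, the verification you defer is the genuinely delicate one: showing $f \notin \mathrm{Retr}(V^{-1}(\exists\LL))$ requires ruling out \emph{every} retract diagram in $\K^\2$ over \emph{every} map of $V^{-1}(\exists\LL)$, not merely observing that the particular retract witnessing $Vf \in \mathrm{Retr}(\exists\LL)$ in $\M^\2$ fails to lie in the image of $V$. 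You should also make sure the hypothesis of Claim~\ref{claim} is non-vacuous in your example, i.e.\ that the lifted algebraic weak factorization system actually exists; this forces you to keep $\K$ locally presentable, $V$ a left adjoint and $(\LL,\RR)$ accessible so that Proposition~\ref{prop:5} applies, which constrains the ``carefully chosen subcategory'' more than your sketch acknowledges.

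For comparison, the paper completes precisely the step you leave open, with two explicit examples built from the comonad-induced algebraic weak factorization system of Example~\ref{ex:2}. In Example~\ref{counterexample:1}, $\M = \K = \Mod_{\ZZ/6}$ and $V = \ZZ/2 \otimes_{\ZZ/6}(\thg)$; in Example~\ref{counterexample:2}, $V \colon \Set \rightarrow M\text-\Set$ is the trivial-action functor for the idempotent monoid $M = \{1,e\}$ --- the latter is in fact a fully faithful left adjoint onto a coreflective subcategory, so your intended shape of counterexample is viable, just unexecuted. The trick that makes the universally quantified non-membership tractable there is to compute $V^{-1}(\exists\LL)$ completely (maps $0 \rightarrow M$ with $M$ $2$-torsion-free, respectively $\emptyset \rightarrow \emptyset$) and observe that this class is \emph{already retract-closed}, so that $\mathrm{Retr}(V^{-1}(\exists\LL))$ is known exactly; one then exhibits a single map ($0 \rightarrow \ZZ/6$, respectively $\emptyset \rightarrow X$ with $X \neq \emptyset$) lying in $V^{-1}(\mathrm{Retr}(\exists\LL))$ but not in it, using projectivity of $\ZZ/2$ as a summand of $\ZZ/6$, respectively cofibrancy of every $M$-set. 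Without an analogous concrete computation, your argument does not yet establish the proposition.
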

As left-lifting along a left adjoint is the same as right-lifting
along its opposite, it suffices to disprove either (i) or (ii). So let
us concentrate on (i), the case of left-lifting $(\LL, \RR)$ along a
left adjoint $V \colon \K \rightarrow \M$. On the one hand, the
underlying weak factorization system of $(\LL, \RR)$ has left class
$\mathrm{Retr}(\exists \LL)$, and so the left-lifting of this
underlying weak factorization system along $V$ has left class
$V^{-1}(\mathrm{Retr}(\exists \LL))$. On the other hand, the
left-lifting of $(\LL, \RR)$ along $V$ is characterized by a pullback
of double categories as in~\eqref{eq:alg-lifting}; so in particular,
we have a pullback of categories as follows; compare with the
situation of~\eqref{eq:3}:
\begin{equation*}
    \cd[@C+0.2em]{
    {\mathsf{Coalg}_{\ll \LL}} \ar[r]^-{} \ar[d]_{\mathrm{U}_{\ll \LL}}
    \save*!/dr-1.7pc/dr:(-1,1)@^{|-}\restore &
    \coalg \ar[d]^{\mathrm{U}_{\LL}} \\
        {\K^\2} \ar[r]_-{V^\2} &
    {\M^\2}\rlap{ .}
  }%  \qquad \text{and} \qquad
%     \cd[@C+0.2em]{
%     {\mathsf{Alg}^{\rl \RR}} \ar[r]^-{} \ar[d]_{\mathrm{U}_{\rl \RR}}
%     \save*!/dr-1.5pc/dr:(-1,1)@^{|-}\restore &
%     {\AAlg} \ar[d]^{\mathrm{U}_\RR} \\
% \C^\2 \ar[r]_-{U^\2} &
% {\M^\2}\rlap{ .}
%  }
\end{equation*}
Inspecting the images of the vertical functors, we see that a map $f$
of $\K$ admits $\smash{\ll \LL}$-map structure if and only if $Vf$
admits $\LL$-map structure. So
$\exists(\smash{\ll \LL}) = V^{-1}(\exists \LL)$ and the underlying
weak factorization system of $(\smash{\ll \LL, \ll \RR})$ has left
class $\mathrm{Retr}(V^{-1}(\exists \LL))$. This analysis shows that
Claim~\ref{claim}(i) is equally the claim that
\begin{equation}\label{eq:12}
  \mathrm{Retr}(V^{-1}(\exists \LL)) =
  V^{-1}(\mathrm{Retr}(\exists \LL))\rlap{ .}
\end{equation}
% which Claim~\eqref{eq:12}(i) asserts to be an equality. Likewise, in
% the case of right-lifting along a right adjoint
% $U \colon \C \rightarrow \M$, Claim~\eqref{eq:12}(ii) asserts that the
% inequality
% \begin{equation}\label{eq:13}
%   \mathrm{Retr}(U^{-1}(\exists \RR)) \subseteq
%   U^{-1}(\mathrm{Retr}(\exists \RR))
% \end{equation}
% is always an equality. We will now show that neither of these claims
% is true: the inequalities in~\eqref{eq:12} or~\eqref{eq:13} may well
% be strict.

Since functors preserve retracts, it will always be the case that
$\mathrm{Retr}(V^{-1}(\exists \LL)) \subseteq
V^{-1}(\mathrm{Retr}(\exists \LL))$; however, the two examples that we
give below below show that, in certain cases, this inclusion is
\emph{strict}. Both of these examples exploit the following general
construction of an algebraic weak factorization system which
originates in~\cite[\sec 4.1]{Bourke2014AWFS2}; we refer the reader to
there for more details.

\begin{Ex}
  \label{ex:2}
  Let $\M$ be a category with finite coproducts, and let $\mathbb{P}$
  be a comonad on $\M$ with counit $\upsilon \colon P \Rightarrow 1$
  and comultiplication $\Delta \colon P \Rightarrow PP$. There is an
  algebraic weak factorization system on $\M$ with functorial
  factorization:
  \begin{equation*}
    X \xrightarrow{f} Y \qquad \mapsto \qquad X \xrightarrow{\iota_1} X
    + PY \xrightarrow{\spn{f, \upsilon_Y}} Y
  \end{equation*}
  and with the fillers $\delta_f$ and $\mu_f$ of~\eqref{eq:4} given by
  the respective composites
  \begin{equation*}
    X + PY \xrightarrow{1_X + P\iota_2\Delta_Y} X + P(X + PY) \
    \text{and} \ X + PY + PY \xrightarrow{1_X + \nabla_{PY}} X + PY\rlap{ .}
  \end{equation*}
  The $\RR$-maps of this algebraic weak factorization system are the
  \emph{$\mathbb{P}$-split epis} $(p,i) \colon X \rightarrow Y$,
  comprising a map $p \colon X \rightarrow Y$ together with a
  ``$\mathbb{P}$-section'': a map $i \colon PY \rightarrow X$ such that
  $pi = \upsilon_Y \colon PY \rightarrow Y$. The $\mathbb{L}$-maps do
  not in general admit a direct description, but the ``algebraically
  cofibrant objects''---the $\mathbb{L}$-maps with domain $0$---are
  precisely the coalgebras for the comonad $\mathbb{P}$.
\end{Ex}

We now give the first of our examples disproving the
equality~\eqref{eq:12}.
\begin{Ex}
  \label{counterexample:1}
  If $A$ is any commutative ring, then there is a weak factorization
  system $(\L, \R)$ on $\Mod_A$ cofibrantly generated by the single
  map $0 \rightarrow A$. The class $\L$ comprises the monomorphisms
  with projective cokernel---so in particular, the $\L$-cofibrant
  objects are the projective modules---while $\R$ comprises the
  epimorphisms; see Lemma~2.2.6 and Proposition~2.2.9 of
  \cite{Hovey1999Model}.

  We obtain an algebraic realization $(\LL, \RR)$ for $(\L, \R)$ using
  Example~\ref{ex:2}, where we take the comonad $\mathbb{P}$ therein
  to be the one generated by the forgetful-free adjunction
  $U \colon \Mod_A \leftrightarrows \cat{Set} \colon F$. In this case,
  the $\mathbb{R}$-maps are $A$-module morphisms
  $f \colon M \rightarrow N$ endowed with a section at the level of
  underlying sets; while an algebraically cofibrant object---a
  $\mathbb{P}$-coalgebra---is easily seen to be a free $A$-module
  endowed with a choice of generators.

  We now specialize to the case $A = \ZZ/6$, so that $\Mod_{A}$ is the
  category of abelian groups in which every element is $6$-torsion. We
  will disprove the equality
  $\mathrm{Retr}(V^{-1}(\exists \LL)) = V^{-1}(\mathrm{Retr}(\exists
  \LL))$ in~\eqref{eq:12} when $V$ is taken to be the left adjoint:
  \begin{equation*}
    V \colon \Mod_{\ZZ/6} \xrightarrow{\ZZ/2 \otimes_{\ZZ/6} (\thg)} \Mod_{\ZZ/6}\rlap{ .}
  \end{equation*}
  
  On the one hand, $0 \rightarrow M$ lies in $\exists \LL$ just when
  $M$ is a free $\ZZ/6$-module. Since the objects in the image of $V$
  are all $2$-torsion, and the only $\ZZ/6$-module which is
  $2$-torsion and free is $0$, it follows that $0 \rightarrow M$ lies
  in $V^{-1}(\exists \LL)$ just when $M$ contains no $2$-torsion
  elements. Since such $M$ can be identified with the $\ZZ/3$-modules,
  they are retract-closed and so, finally, $0 \rightarrow M$ lies in
  $\mathrm{Retr}(V^{-1}(\exists \LL))$ just when $M$ contains no
  $2$-torsion elements.

  On the other hand, a map $0 \rightarrow M$ is in
  $\mathrm{Retr}(\exists \LL)$ just when $M$ is projective; it now
  follows that $0 \rightarrow \ZZ/6$ lies in
  $V^{-1}(\mathrm{Retr}(\exists \LL))$, since $V(\ZZ/6) = \ZZ/2$ is
  projective as a direct summand $\ZZ/6 \cong \ZZ/2 \oplus \ZZ/3$. We
  have thus shown that $0 \rightarrow \ZZ/6$ is in
  $V^{-1}(\mathrm{Retr}(\exists \LL))$ but not in 
  $\mathrm{Retr}(V^{-1}(\exists \LL))$, as desired.
\end{Ex}

The second example is built on the same principle. 

\begin{Ex}
  \label{counterexample:2}
  Let $M$ be the monoid $\{1, e\}$ with $e^2 = e$ and consider
  the category $M\text-\Set$ of $M$-sets endowed with the weak
  factorization system $(\L, \R)$ cofibrantly generated by the single
  map $\emptyset \rightarrow M$. The $\R$-maps are the epimorphisms,
  and as each $M$-set is a retract of a coproduct of $M$'s, each
  object is cofibrant.

  We obtain an algebraic realization $(\LL, \RR)$ for $(\L, \R)$ using
  Example~\ref{ex:2}, where we take the comonad $\mathbb{P}$ therein
  to be the one generated by the free-forgetful adjunction
  $U \colon M\text-\Set \leftrightarrows \cat{Set} \colon F$. Now
  $\mathbb{R}$-maps are maps of $M$-sets endowed with a section of
  underlying sets; while an algebraically cofibrant object is one with
  free $M$-action (the coalgebra structure is, in this case, uniquely
  determined).

  In this situation, we will show
  $\mathrm{Retr}(V^{-1}(\exists \LL)) \subsetneq
  V^{-1}(\mathrm{Retr}(\exists \LL))$ with $V$ taken to be the left
  adjoint functor $\cat{Set} \rightarrow M\text-\cat{Set}$ which
  endows each set with its trivial $M$-action. On the one hand,
  $\emptyset \rightarrow X$ lies in $\exists \LL$ just when $X$ is a
  free $M$-set. Since the trivial action is only free on the empty
  set, we see that $\emptyset \rightarrow X$ lies in
  $V^{-1}(\exists \LL)$, or equally in
  $\mathrm{Retr}(V^{-1}(\exists \LL))$, only when $X = \emptyset$. On
  the other hand, every map of the form $\emptyset \rightarrow X$ is
  in $\mathrm{Retr}(\exists \LL)$, and so every map
  $\emptyset \rightarrow X$ lies in
  $V^{-1}(\mathrm{Retr}(\exists \LL))$. So
  $\mathrm{Retr}(V^{-1}(\exists \LL)) \subsetneq
  V^{-1}(\mathrm{Retr}(\exists \LL))$ as desired.
\end{Ex}

These examples are concerned with lifting factorizations for a single
weak factorization system. If desired, they can be enhanced to
examples concerning lifting factorizations for an accessible model
category by taking $\Cof = \L$ and $\Fib = \R$ and $\WE = $ all maps.
Of course, the model categories so arising are homotopically rather
uninteresting, but in particular cases we may be able to do better.
For instance, a dg version of Example \ref{counterexample:1} occurs in
lifting the (cofibration, acyclic fibration) weak factorization system
of the standard model structure on $\cat{Ch}(\Mod_{\ZZ/6})$.

%
%On the other
% hand, we can obtain corresponding examples concerning the
% right-lifting of a weak factorization system or a model structure by
% exploiting the self-duality of the notion: a left-lifting (of either a
% weak factorization system or an algebraic weak factorization system)
% along a left adjoint $V \colon \K \rightarrow \M$ is the same as a
% right-lifting along
% $V^\mathrm{op} \colon \K^\mathrm{op} \rightarrow \M^\mathrm{op}$. Note
% that these examples are no longer accessible, since the underlying
% categories are the \emph{opposites} of locally presentable categories.

\section{Fixing the previous proof}\label{sec:fixing-prev-proof}
In this final section, we describe how the erroneous Claim~\ref{claim}
can be corrected by adding extra hypotheses, and then show that this
revised claim allows for a correct proof of the algebraic version of Theorem~\ref{thm:2}.
Towards our first goal, let us define an algebraic weak factorization
system $(\LL, \RR)$ to be \emph{left-retract-closed} (resp.,
\emph{right-retract-closed}) if the class of maps $\exists \LL$
(resp., $\exists \RR$) is closed under retracts.
\begin{Prop}
  \label{prop:8}
  Claim~\ref{claim}(i) holds for any $(\LL, \RR)$ which is
  left-retract-closed, while Claim~\ref{claim}(ii) holds for any
  right-retract-closed $(\LL, \RR)$.
\end{Prop}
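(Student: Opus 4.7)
The plan is to exploit the reformulation already carried out in Section~\ref{sec:flaw-previous-proof}: Claim~\ref{claim}(i) was shown to be equivalent to the set-theoretic equality~\eqref{eq:12}, and the inclusion $\mathrm{Retr}(V^{-1}(\exists \LL)) \subseteq V^{-1}(\mathrm{Retr}(\exists \LL))$ is automatic since functors preserve retracts. So the work is to establish the reverse inclusion under the hypothesis that $\exists \LL$ is closed under retracts.

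The key observation is that any functor, and in particular $V^\2 \colon \K^\2 \rightarrow \M^\2$, preserves retracts; hence retract-closure of a class of morphisms in $\M^\2$ transports to retract-closure of its preimage in $\K^\2$. Concretely, if $f$ is a retract of $g$ with $Vg \in \exists \LL$, then $Vf$ is a retract of $Vg$, and retract-closure of $\exists \LL$ places $Vf$ in $\exists \LL$, so $f \in V^{-1}(\exists \LL)$. Thus $V^{-1}(\exists \LL)$ is itself retract-closed. Combining this with the hypothesis $\mathrm{Retr}(\exists \LL) = \exists \LL$, both sides of~\eqref{eq:12} collapse to $V^{-1}(\exists \LL)$, which delivers Claim~\ref{claim}(i).

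The argument for Claim~\ref{claim}(ii) is entirely dual: the hypothesis that $\exists \RR$ is retract-closed, together with the fact that $U^\2 \colon \C^\2 \rightarrow \M^\2$ preserves retracts, ensures that $U^{-1}(\exists \RR)$ is retract-closed, and the reformulation analogous to~\eqref{eq:12} on the right-lifted side then forces the underlying weak factorization system of $(\rl\LL, \rl\RR)$ to coincide with the right-lifting of the underlying weak factorization system of $(\LL, \RR)$. I do not anticipate any real obstacle in carrying this out: the very definitions of left- and right-retract-closure have been arranged so that the retract-closure discussion from Section~\ref{sec:flaw-previous-proof} reduces, via this one-line functoriality remark, to the desired equality.
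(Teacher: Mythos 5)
Your proof is correct and takes essentially the same route as the paper: both reduce Claim~\ref{claim}(i) to the equality~\eqref{eq:12}, use that functors preserve retracts to get one inclusion, and then use the hypothesis $\exists\LL = \mathrm{Retr}(\exists\LL)$ to get the other. Your phrasing (noting that $V^{-1}(\exists\LL)$ is itself retract-closed, so both sides collapse) is a slight repackaging of the paper's chain $V^{-1}(\mathrm{Retr}(\exists\LL)) = V^{-1}(\exists\LL) \subseteq \mathrm{Retr}(V^{-1}(\exists\LL))$, but the substance is identical.
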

\begin{proof}
  The two cases are dual, so it suffices to consider a
  left-retract-closed $(\LL, \RR)$ on $\M$ and a left adjoint
  $V \colon \K \rightarrow \M$ along which the left-lifting
  $(\smash{ \ll \LL, \ll \RR})$ exists. We must prove the
  equality~\eqref{eq:12}. We already noted that
  \begin{equation*}
    \mathrm{Retr}(V^{-1}(\exists \LL)) \subseteq
    V^{-1}(\mathrm{Retr}(\exists \LL))
  \end{equation*}
  since functors preserve retracts. Conversely, because
  $\exists \LL = \mathrm{Retr}(\exists \LL)$, we have 
  \begin{equation*}
    V^{-1}(\mathrm{Retr}(\exists \LL)) = V^{-1}(\exists \LL) \subseteq
    \mathrm{Retr}(V^{-1}(\exists \LL))\rlap{ .} \qedhere
  \end{equation*}
\end{proof}

This suggests the following legitimate construction of the left- or
right-liftings of an accessible weak factorization system $(\L, \R)$.
One first chooses a \emph{left-retract-closed} (resp.,
right-retract-closed) accessible algebraic realization; then lifts
that; and then takes the underlying weak factorization system.

In order for this to work, the required left- and right-retract-closed
algebraic realizations of $(\L, \R)$ must exist. Since we already know
that at least one accessible algebraic realization $(\LL, \RR)$
exists, it suffices to show that this can be adjusted to a
left-retract-closed one $(\LL^\sharp, \RR^\sharp)$ and a
right-retract-closed one $(\LL^\flat, \RR^\flat)$ with the same
underlying weak factorization system.

The idea is to construct the adjustment $(\LL^\sharp, \RR^\sharp)$ in
such a way that the $\LL^\sharp$-maps are precisely the cloven
$\L$-maps: for then, by Lemma~\ref{lem:4}, $\exists(\LL^\sharp) = \L$,
which is indeed closed under retracts; moreover, the underlying weak
factorization system is clearly the same. Dually, we will construct
$(\LL^\flat, \RR^\flat)$ such that the $\RR^\flat$-maps are the cloven
$\R$-maps. In fact, by Theorem~\ref{thm:2}, these motivating
\emph{descriptions} of $(\LL^\sharp, \RR^\sharp)$ and
$(\LL^\flat, \RR^\flat)$ are nearly sufficient for their
\emph{construction}. The only additional aspect that is required is:

\begin{Prop}
  \label{prop:7}
  Let $(\LL, \RR)$ be an accessible algebraic weak factorization
  system. The cloven $\L$-maps admit a composition law
  $\cat{Clov}(\L) \times_\M \cat{Clov}(\L) \rightarrow \cat{Clov}(\L)$
  making them the vertical morphisms and squares of a double
  category $\mathbb{C}\cat{lov}(\L) \rightarrow \Sq \M$ over $\Sq \M$
  whose objects and horizontal morphisms are those of $\M$. Dually,
  the cloven $\R$-maps constitute a double category
  $\mathbb{C}\cat{lov}(\R) \rightarrow \Sq \M$.
\end{Prop}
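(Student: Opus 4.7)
The plan is to construct the double category $\mathbb{C}\cat{lov}(\L)$ by equipping the cloven $\L$-maps with an associative and unital vertical composition, and then extending this to squares. The key observation is that the composition of $\LL$-coalgebras in the double category $\CCoalg$ from~\cite{bourke-garner-awfsI} uses only the unit axioms for the coalgebra structures, and so extends verbatim from $\LL$-coalgebras to cloven $\L$-maps, which are merely $(L, \vec\epsilon)$-coalgebras.

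For the vertical identity on $A \in \M$, I would take the cloven $\L$-map $(1_A, L 1_A) \colon A \to A$; the cleavage conditions $R 1_A \cdot L 1_A = 1_A$ and $L 1_A \cdot 1_A = L 1_A$ hold by the factorisation axiom and triviality respectively. For the vertical composition of cloven $\L$-maps $(f, s) \colon A \to B$ and $(g, t) \colon B \to C$, I would construct a canonical cleavage for $gf$ built from $s$, $t$, the monad multiplication $\mu_{gf} \colon ER(gf) \to E(gf)$, and $E$ applied to suitable squares in $\M^\2$, following the composition formula of~\cite{bourke-garner-awfsI} for $\LL$-coalgebras. Squares compose analogously: given squares $(h, k) \colon (f, s) \to (f', s')$ and $(k, \ell) \colon (g, t) \to (g', t')$ in $\cat{Clov}(\L)$, the induced pair $(h, \ell)$ is a morphism between the composites by naturality of $E$ and $\mu$.

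The double-category axioms would then be verified by diagram chases with the AWFS structure maps: the unit laws reduce to the monad and comonad unit axioms of $\RR$ and $\LL$, while associativity follows from the associativity of $\mu$ together with the distributive law $(\delta, \mu) \colon LR \Rightarrow RL$. These arguments parallel those for $\CCoalg$ in~\cite{bourke-garner-awfsI}, and are in fact slightly simpler since we never appeal to coassociativity of the cleavages. The main obstacle is establishing associativity of vertical composition, which requires a careful application of the distributive law. The dual case of cloven $\R$-maps is handled symmetrically, with the comonad comultiplication $\delta$ playing the role of $\mu$.
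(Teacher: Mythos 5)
Your construction is in substance the same as the paper's---both are adaptations of \S 2.7 of bourke--garner-awfsI---and your key observation is exactly the right one: the composition of $\LL$-coalgebras uses only the counit-level data, so it restricts to cleavages, with vertical identity $(1_A, L1_A)$ just as in the paper. The difference is one of packaging, and it matters for the step you leave unexecuted. The paper first establishes an isomorphism $\cat{Clov}(\L) \cong \cat{Lift}_\RR$ over $\M^\2$, where $\cat{Lift}_\RR$ consists of maps equipped with lifting operations against all $\RR$-algebras natural in algebra maps; this uses the initiality of $(Lf,1)\colon f \to \mathrm{U}_\RR(Rf,\mu_f)$ in $f \downarrow \mathrm{U}_\RR$, i.e.\ the universal property of the free algebra. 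Composition is then defined on lifting operations by ``lift through $f$, then through $g$'', so associativity and unitality are immediate by inspection of the formulae, and your explicit cleavage formula (built from $s$, $t$, $\mu_{gf}$ and $E$ of squares) is precisely the transport of this composition along the isomorphism. If instead you verify the double-category axioms directly on the explicit formula, as you propose, the associativity chase is genuinely nontrivial, and your stated ingredients for it are off: the distributive law $(\delta,\mu)\colon LR \Rightarrow RL$ plays no role whatsoever on the left-hand side (neither $\delta$ nor any coassociativity is ever invoked; only the copointed structure, naturality of $L$, $E$ and $\vec\mu$, the cleavage axioms, and the monad unit and associativity laws through the free algebras enter), so as written that step is the soft spot of your argument. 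Either carry out the chase with these ingredients, or---more economically---insert the cleavage/lifting-operation bijection first, after which associativity and the unit laws are automatic; your treatment of squares and of the dual case is otherwise fine.
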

\begin{proof}
  By duality, we need only consider the left case. Our proof
  follows~\cite[\sec 2.7]{bourke-garner-awfsI}. To begin with, we
  define an \emph{algebra lifting operation} for a map
  $f \colon A \rightarrow B$ to be the choice, for each $\RR$-algebra
  $(g,p) \colon C \rightarrow D$ and each map
  $(h,k) \colon f \rightarrow g$ in $\M^\2$ of a diagonal filler
  $\varphi_{(g,p)}(h,k) \colon B \rightarrow C$:
  \begin{equation}\label{eq:15}
    \cd[@+0.5em]{
      {A} \ar[r]^-{h} \ar[d]_{f} &
      {C} \ar[d]^{g} \\
      {B} \ar[r]_-{k} \ar@{-->}[ur]|-{\varphi(h,k)} &
      {D} \rlap{ ,}
    }
  \end{equation}
  subject to the naturality condition that, for any map
  $(u,v) \colon (g,p) \rightarrow (h,r)$ of $\mathbb{R}$-algebras, we
  have $u \varphi_{(g,p)}(h,k) = \varphi_{(h,r)}(uh,vk)$.

  Now, a square like~\eqref{eq:15} is equally an object of the comma
  category $f \downarrow \mathrm{U}_\RR$, and the unit map
  $(Lf, 1) \colon f \rightarrow \mathrm{U}_\RR(Rf, \mu_f)$ is initial
  in this comma category; so to give $\varphi$ is equally to give a
  single map $\varphi_{(Rf, \mu_f)}(Lf, 1) \colon B \rightarrow Ef$
  filling the left square of~\eqref{eq:14}. In this way, we obtain an
  isomorphism $\cat{Clov}(\L) \cong \cat{Lift}_\RR$ over
  $\smash{\M^\2}$, where $\cat{Lift}_\RR$ is the category of maps
  endowed with algebra lifting operations and squares commuting with
  the lifting operations.

  We may now exploit this isomorphism to define the desired
  composition law on algebra lifting operations rather than on cloven
  $\L$-maps. Given maps $f \colon A \rightarrow B$ and
  $g \colon B \rightarrow C$ endowed with lifting operations $\varphi$
  and $\psi$, we obtain a composite lifting operation $\psi \varphi$
  on $gf$ by first lifting against $f$ and then against $g$:
  \begin{equation*}
    \psi \varphi_{(h,p)}(u,v) = \psi_{(h,p)}(\varphi_{(h,p)}(u,vg), v) \qquad \quad
  \vcenter{
  \xymatrix@C=8pc{ A \ar[d]_{f} \ar[r]^u & D \ar[dd]^h \\ B
    \ar@{-->}[ur]^(.4){\phi(u,vg)} \ar[d]_g & \\ C \ar[r]_v
    \ar@{-->}[uur]_{\psi(\phi(u,vg),v)} & E\rlap{ .}
  }}
  \end{equation*}
  This assignation is easily functorial with respect to maps of
  lifting operations, thus yielding a functor
  $\cat{Lift}_\RR \times_\M \cat{Lift}_\RR \rightarrow
  \cat{Lift}_\RR$. To see that this gives rise to the desired double
  category, we must check associativity and unitality of this
  composition law. Associativity is immediate on comparing the
  formulae for $\xi(\psi \varphi)$ and $(\xi \psi)\varphi$; while an
  identity at $A$ is easily seen to be given by the lifting structure
  $(1_A, \iota_A) \colon A \rightarrow A$ with
  $(\iota_A)_{(g,p)}(u,v) = u$.
\end{proof}
\begin{Rk}
  \label{rk:2}
  The double categories $\mathbb{C}\cat{lov}(\L)$ and
  $\mathbb{C}\cat{lov}(\R)$ are in fact expansions of the double
  categories $\CCoalg$ and $\AAlg$ of $\LL$- and $\RR$-maps: the above
  proof simply repeats the construction of the composition laws on the
  latter in the broader context. In particular, this means that there
  are canonical inclusion double functors
  $\CCoalg \hookrightarrow \mathbb{C}\cat{lov}(\L)$ and
  $\AAlg \hookrightarrow \mathbb{C}\cat{lov}(\R)$ over $\Sq \M$.
\end{Rk}
We now use these double categories to build the desired left- and
right-shifted algebraic weak factorization systems.

\begin{Prop}\label{prop:shifted-awfs}
  Let $(\LL,\RR)$ be an accessible algebraic weak factorization system
  on a locally presentable category $\M$. There exist accessible
  algebraic weak factorization systems $(\LL^\sharp,\RR^\sharp)$ and
  $(\LL^\flat, \RR^\flat)$ characterised by isomorphisms of double
  categories
  \begin{equation}\label{eq:16}
    \CCoalg[\LL^\sharp] \cong \mathbb{C}\cat{lov}(\L) \qquad
    \text{and} \qquad
    \AAlg[\RR^\flat] \cong \mathbb{C}\cat{lov}(\R)
  \end{equation}
  over $\Sq \M$. Furthermore, $(\LL^\sharp,\RR^\sharp)$ is
  left-retract-closed, $(\LL^\flat, \RR^\flat)$ is
  right-retract-closed, and both have the same underlying weak
  factorization system as $(\LL, \RR)$.
\end{Prop}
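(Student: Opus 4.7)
The plan is to apply Theorem~\ref{thm:bourke} to the double category $\mathbb{C}\cat{lov}(\L) \to \Sq\M$ supplied by Proposition~\ref{prop:7}; by the duality between the $\LL$- and $\RR$-sides, I focus on constructing $(\LL^\sharp, \RR^\sharp)$, with $(\LL^\flat, \RR^\flat)$ arising from the same argument applied to $\mathbb{C}\cat{lov}(\R)$. Once the three hypotheses of Theorem~\ref{thm:bourke} are verified, that theorem immediately furnishes the AWFS $(\LL^\sharp, \RR^\sharp)$ together with the defining isomorphism $\CCoalg[\LL^\sharp] \cong \mathbb{C}\cat{lov}(\L)$ over $\Sq\M$.

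Of the three hypotheses, (i) is automatic, since $\mathbb{C}\cat{lov}(\L)$ has the same objects and horizontal arrows as $\Sq\M$. For (ii), I need strict comonadicity of $\ul \colon \cat{Clov}(\L) \to \M^\2$; Lemma~\ref{lem:4} identifies this as a left adjoint between locally presentable categories, while the alternative description $\cat{Clov}(\L) \cong \ecoalg$ presents it as the forgetful functor from coalgebras for the copointed endofunctor $(L, \vec\epsilon)$. The standard theory of cofree comonads on accessible copointed endofunctors over locally presentable categories then yields a comonad whose coalgebras coincide strictly with $\ecoalg$, delivering the required strict comonadicity. For (iii), given any $(f, s) \in \cat{Clov}(\L)$ with $f \colon A \to B$, I would endow $1_A$ with the canonical cleavage $L(1_A)$; the required morphism $(1_A, f) \colon (1_A, L(1_A)) \to (f, s)$ in $\cat{Clov}(\L)$ then reduces to verifying $s \circ f = E(1_A, f) \circ L(1_A)$, which follows by combining the naturality identity $E(1_A, f) \circ L(1_A) = L(f)$ with the cleavage axiom $s \circ f = L(f)$.

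With the hypotheses verified, Theorem~\ref{thm:bourke} produces $(\LL^\sharp, \RR^\sharp)$, and the remaining properties follow directly. By the defining isomorphism, $\exists \LL^\sharp$ consists of maps admitting cloven $\L$-map structure, which is precisely $\L$ by Lemma~\ref{lem:4}; so $\exists \LL^\sharp = \L$ is closed under retracts, yielding left-retract-closedness. Applying Lemma~\ref{lem:2} gives the underlying weak factorization system a left class $\mathrm{Retr}(\exists \LL^\sharp) = \L$, matching that of $(\LL, \RR)$. For accessibility, the new factorization functor $E^\sharp$ factors as $\M^\2 \xrightarrow{G} \cat{Clov}(\L) \xrightarrow{\ul} \M^\2 \to \M$ with codomain projection appended, each step being accessible in view of Lemma~\ref{lem:4}.

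The most delicate point is the strict comonadicity condition in (ii): one must work with the cofree comonad on the copointed endofunctor rather than the comonad induced by the adjunction $\ul \dashv G$, and verify that the resulting comparison functor is a genuine isomorphism of categories rather than only an equivalence. The construction of $(\LL^\flat, \RR^\flat)$ and its right-retract-closedness follow by formal dualization, replacing $\cat{Clov}(\L)$ with $\cat{Clov}(\R)$, comonads with monads, and cofree with free throughout.
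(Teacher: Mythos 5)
Your argument is correct and essentially the paper's own: you apply Theorem~\ref{thm:bourke} to the double categories of Proposition~\ref{prop:7}, verify strict comonadicity of $\ul$ via its right adjoint (Lemma~\ref{lem:4}) together with its description as the forgetful functor from $(L,\vec\epsilon)$-coalgebras, check the identity-square condition by the computation $E(1_A,f)\circ L1_A = Lf = sf$, and then deduce left-retract-closedness, the unchanged underlying weak factorization system, and accessibility exactly as the paper does. The only slight imprecision is your closing caveat: when the forgetful functor from coalgebras for a copointed endofunctor admits a right adjoint, the comonad induced by the adjunction $\ul\dashv G$ \emph{is} the cofree comonad on that copointed endofunctor and the comparison functor is automatically an isomorphism (this is exactly the dual of the result of Kelly that the paper cites), so no separate verification is needed there.
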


\begin{proof}
  We verified the final sentence above; as for the existence of
  $(\LL^\sharp, \RR^\sharp)$ and $(\LL^\flat, \RR^\flat)$, the 
  arguments involve applying Theorem \ref{thm:bourke} to the double
  categories $\mathbb{C}\cat{lov}(\L)$ and $\mathbb{C}\cat{lov}(\R)$
  over $\Sq \M$. We give details only in the left case.

  For hypothesis (i) of Theorem~\ref{thm:2}, the object-level functor
  $1_\M \colon \M \rightarrow \M$ is clearly an isomorphism, while the
  arrow-level functor
  $\mathrm{U}_\L \colon \cat{Clov}(\L) \rightarrow \M^\2$ has a right
  adjoint by Lemma~\ref{lem:4}, and is therefore comonadic because it
  is the forgetful functor from the category of coalgebras for a
  copointed endofunctor; see~\cite[\sec 5.1]{Kelly1980A-unified}, for
  example.

  For hypothesis (ii), note first that the \emph{unique} cloven
  $\L$-map structure on an identity map $1_A \colon A \rightarrow A$
  is given by $(1_A, L1_A) \colon A \rightarrow A$. To verify (ii)
  therefore, we must show that any cloven $\L$-map
  $(f,s) \colon A \rightarrow B$, the square left below lifts to a map
  $(1_A, L1_A) \rightarrow (f,s)$ of cloven $\L$-maps.
  \begin{equation*}
    \cd[@!C@C-0.3em]{
      {A} \ar[r]^-{1_A} \ar[d]_{1_A} &
      {A} \ar[d]^{f} & &
      \sh{l}{0.3em}{E1_A} \ar[r]^-{E(1_A, f)} &
      \sh{r}{0.3em}{Ef} \\
      {A} \ar[r]_-{f} &
      {B} & &
      {A} \ar[r]_-{f} \ar[u]^{L1_A} &
      {B} \ar[u]_{s} 
    }
  \end{equation*}
  This is equally to show the commutativity of the square above right;
  for which we calculate that $E(1_A,f) \circ L1_A = Lf = sf$.
\end{proof}

\begin{Rk}
  The inclusion double functors of Remark~\ref{rk:2} compose with the
  isomorphisms~\eqref{eq:16} to yield double functors
  $\CCoalg \rightarrow \CCoalg[\LL^\sharp]$ and
  $\AAlg \rightarrow \AAlg[\RR^\sharp]$ over $\Sq \M$. The existence
  of these double functors can be equivalently expressed as saying
  that we have \emph{oplax} (= ``left Quillen'') morphisms of
  algebraic weak factorization systems
  $(\LL^\flat, \RR^\flat) \rightarrow (\LL, \RR) \rightarrow
  (\LL^\sharp, \RR^\sharp)$ with underlying functor the identity;
  see~\cite[Lemma~6.9]{riehl-algebraic-model-structures}.
\end{Rk}

Using the preceding proposition, we can finally give:

\begin{proof}[Proof of Theorem~\ref{thm:2} (bis)]
  Given the accessible weak factorization system $(\L, \R)$ on $\M$,
  we first choose an accessible algebraic realization $(\LL, \RR)$. In
  the left-lifted case, we then replace this with the
  left-retract-closed realization $(\LL^\sharp, \RR^\sharp)$ given by
  Proposition~\ref{prop:shifted-awfs}. Now by
  Proposition~\ref{prop:5}, this admits a left-lifting along
  $V \colon \K \rightarrow \M$ to an accessible algebraic weak
  factorization system $(\ll{\LL}^\sharp, \ll{\RR}^\sharp)$ on $\K$.
  Since we are in the left-retract-closed situation,
  Proposition~\ref{prop:8} ensures that the underlying weak
  factorization system of $(\ll{\LL}^\sharp, \ll{\RR}^\sharp)$ is the
  desired accessible left-lifting of $(\L, \R)$ along $V$. The case of
  right-lifting is entirely dual.
\end{proof}

\begin{Rk}
  \label{rk:1}
  In giving the preceding proof, we treated the left- and right-lifted
  cases entirely symmetrically; however, in practice there is an
  asymmetry. The proof of Proposition~\ref{prop:1} above, which we
  omitted, involves the construction of a particular accessible
  algebraic realization $(\LL, \RR)$ for each given accessible
  $(\L, \R)$. It turns out that this particular $(\LL, \RR)$ is always
  right-retract-closed, since its category of $\RR$-maps is
  \emph{cofibrantly generated by a small category} in the sense
  of~\cite{Garner2009Understanding}. Thus, so long as this particular
  algebraic realization is chosen, there is no need to make an
  adjustment in the right-lifted case. This point was already spelt
  out by the third author
  in~\cite[Theorem~2.10]{riehl-algebraic-model-structures}. 
  % The situation with left-lifting is completely different: the
  % accessible algebraic realizations provided by
  % Proposition~\ref{prop:1} are rarely left-retract-closed.
\end{Rk}

\bibliographystyle{acm}
\bibliography{./biblio}

\end{document}